\theoremstyle{plain}
\newtheorem{theorem}{Theorem}[section]
\newtheorem{corollary}[theorem]{Corollary}
\newtheorem{lemma}{Lemma}[section]
\newtheorem{example}{Example}
\DeclareMathOperator{\mex}{mex}
\theoremstyle{definition}
\newtheorem{definition}{Definition}[section]
\theoremstyle{remark}
\newtheorem{remark}{\textbf{\textit{Remark}}}
\theoremstyle{remark}
\numberwithin{equation}{section}
\author{Geremias Polanco}
\title{Decomposition of Beatty and Complementary Sequences}
\begin{document}

\begin{abstract}
  In this paper we express the difference of two complementary Beatty sequences, as the sum of two Beatty sequences closely related to them. In the process we introduce a new Algorithm that generalizes the well known Minimum Excluded algorithm and provides a way to generate combinatorially any pair of complementary Beatty sequences. 
  \end{abstract}

\maketitle

\normalsize

\section{Introduction and Statement of parts (1) and (2) of the main theorem}\label{intro}

Wythoff \cite{wythoff1907modification} used 
\begin{equation}
\label{wythoff}
\left[ n \phi^2 \right]- \left[ n \phi \right] =n\text{,}
\end{equation}
where $\phi$ is the golden ratio, in the analysis of a generalized game of Nim (here $\left[x \right]$ denotes the largest integer not exceeding $x$). Later (for instance in \cite{fraenkel_1982} and  \cite{holladay1968some}) it was shown that $$\left[ n \alpha^2 \right]- \left[ n \beta \right] =tn\textbf{,}$$ for $\alpha=(2-t+\sqrt{t^2+4})/2$, and $\beta$ satisfying $\alpha^{-1}+\beta^{-1}=1$. Other generalizations have been done covering the Beatty sequences parametrized by limited families of irrational numbers. We obtain a generalization of identity \eqref{wythoff} for all complementary sequences.
For example, $$\left[ n \frac{3+\sqrt{3}}{2} \right]-\left[ n \sqrt{3} \right]=\left[ n \frac{\sqrt{3}-1}{2} \right]+\left[ n (2-\sqrt3) \right]+1.$$
 In particular, we prove Theorem 1.2 below, which has the following as a corollary.
\begin{corollary}
\label{corollary1}
Let $1<\alpha<\beta$ be irrational numbers with $\frac1\alpha+\frac1\beta=1$. Then
$$\left[ n \beta \right]-\left[ n \alpha \right]=\left[ n \left(\frac{\beta}\alpha-1\right) \right]+\left[ n \left(1-\frac\alpha\beta \right)\right]+1.$$
\end{corollary}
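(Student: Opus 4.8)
The plan is to deduce Corollary~\ref{corollary1} directly from the normalization $\frac1\alpha+\frac1\beta=1$, by first simplifying the two arguments on the right-hand side. Clearing denominators in the hypothesis in two ways — multiplying by $\beta$ and then by $\alpha$ — gives $\frac\beta\alpha=\beta-1$ and $\frac\alpha\beta=\alpha-1$. Hence $\frac\beta\alpha-1=\beta-2$ and $1-\frac\alpha\beta=2-\alpha$ (both positive, since the constraint together with $\alpha<\beta$ forces $1<\alpha<2<\beta$), so the claimed identity is equivalent to
\[
\left[n\beta\right]-\left[n\alpha\right]=\left[n(\beta-2)\right]+\left[n(2-\alpha)\right]+1.
\]

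The second step is pure integer-part bookkeeping. Since $2n\in\Z$, we have $\left[n(\beta-2)\right]=\left[n\beta\right]-2n$ and $\left[n(2-\alpha)\right]=2n+\left[-n\alpha\right]$. Because $\alpha$ is irrational and $n\ge 1$, the number $n\alpha$ is not an integer, so $\left[-n\alpha\right]=-\left[n\alpha\right]-1$. Plugging these into the right-hand side makes $-2n$ cancel $+2n$ and $+1$ cancel $-1$, leaving $\left[n\beta\right]-\left[n\alpha\right]$ exactly. Equivalently, one can verify the displayed identity by writing each floor as its value minus its fractional part and using $\{n\beta-2n\}=\{n\beta\}$ together with $\{2n-n\alpha\}=1-\{n\alpha\}$; this is the computation whose shape matches the conclusion of Theorem~1.2, of which the corollary is a special case.

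I do not anticipate a genuine obstacle for the corollary itself: the only place any care is needed is invoking the irrationality of $\alpha$ to get $\left[-n\alpha\right]=-\left[n\alpha\right]-1$ (which is false at integers), together with the standing convention $n\ge 1$ — for $n=0$ the stated identity already fails, since its right-hand side equals $1$. The substantive content of the paper evidently lies in Theorem~1.2 and the generalized minimum-excluded algorithm, which must be what supplies the combinatorial \emph{decomposition} of arbitrary complementary pairs; the corollary is then just the clean closed-form shadow of that theorem once the slopes $\frac\beta\alpha-1$ and $1-\frac\alpha\beta$ are rewritten.
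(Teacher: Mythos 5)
Your proof is correct and follows essentially the same route as the paper: both arguments first use $\frac1\alpha+\frac1\beta=1$ to rewrite the slopes as $\beta-2$ and $2-\alpha$ (the paper's equation for $\frac{2-\alpha}{\alpha-1}=\beta-2$), and then reduce the identity to elementary integer-part bookkeeping, which the paper phrases via the inequalities $k<\beta n<k+1$, etc., and you phrase via $\left[x+m\right]=\left[x\right]+m$ and $\left[-x\right]=-\left[x\right]-1$ for non-integral $x$. Your observation that irrationality of $n\alpha$ is the one point requiring care is exactly where the paper's strict inequalities do the same work.
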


We will give a combinatorial proof of Theorem 1.2.  Before stating the theorem, we fix the following notation. We say that $A$ and $B$ are complementary sets of natural numbers if $A\cap B=\emptyset$ and $A\cup B = {\mathbb N}$. For any multiset $X$ of integers that is bounded below, we number the elements (with multiplicity) of $X$ as $x_1\le x_2\le \dots$, linking in this way indexed lower-case letters (the elements in nondecreasing order) and the upper-case letters (the set). We use set subtraction and addition to denote the termwise operations:
$$X \pm Y = \{x_n \pm y_n : n \in {\mathbb N}\}.$$
A set whose terms are given by $b_n = \left[ n \beta \right]$ for some $\beta>0$ is called a Beatty sequence. We remind the reader of Beatty's Theorem: The sets $A,B$ given by $a_n=\left[ n \alpha \right]$ and $b_n=\left[ n \beta \right]$ are complementary sets of natural numbers if and only if $\alpha$ is irrational and $\frac1\alpha+\frac1\beta=1$. 

\begin{theorem}
[Partial Decomposition of  Differences of Complementary Sequences Parts (1) and (2)]
\label{partial2}
Suppose that $A$ and $B$ are any pair of complementary sequences. Then
\begin{enumerate}
\item The sequence of difference $A-B$, term by term, can always be decomposed in terms of the sum of two other sequences $C$ and $R$ in an easy to see way as follows: 
\begin{equation}
\label{mespropo1}
b_n-a_n=c_n+r_n+1\text{,}
\end{equation}

\noindent where $c_n$ counts the number of integers strictly between $a_n$ and $b_n$ that belong to the sequence $A$, and $r_n$ count the number of integers strictly between $a_n$ and $b_n$ that are in the sequence $B$.  
\item If the two complementary sequences $A$ and $B$ are also Beatty sequences given by $\alpha,\,\beta$ respectively, with $1<\alpha<2$, then $C$ and $R$ in equation \eqref{mespropo1} are Beatty sequences whose $nth$ terms are given respectively by the slopes $\gamma=\frac{2-\alpha}{\alpha-1}=\beta/\alpha -1$ and $\rho=2-\alpha=1-\alpha/\beta$. 

\end{enumerate}
\end{theorem}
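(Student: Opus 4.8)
The plan is to dispatch part~(1) as an immediate consequence of complementarity, and then to prove part~(2) by computing $c_n$ and $r_n$ in closed form via a counting argument and recognizing the two answers as Beatty sequences. For part~(1) I would first note that in the situation of interest $a_n<b_n$: since $1<\alpha<2$ and $\tfrac1\alpha+\tfrac1\beta=1$ force $\beta>2$, we get $a_n=[n\alpha]\le 2n-1<2n\le[n\beta]=b_n$ (and in general one indexes the complementary pair so that $\alpha<\beta$). The open interval $(a_n,b_n)$ then contains exactly $b_n-a_n-1$ integers, and since $A\cup B=\N$ and $A\cap B=\emptyset$, each of them lies in exactly one of $A$, $B$. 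By the very definitions of $c_n$ and $r_n$ these two counts exhaust them, so $c_n+r_n=b_n-a_n-1$, which is \eqref{mespropo1}. No irrationality or Beatty structure enters here.

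For part~(2) I would introduce the counting functions $\#_A(m)=|\{k:a_k\le m\}|$ and $\#_B(m)=|\{k:b_k\le m\}|$ and use two elementary facts: complementarity gives $\#_A(m)+\#_B(m)=m$ for every $m\in\N$, and $\alpha,\beta>1$ make $A$ and $B$ strictly increasing, so $\#_A(a_n)=n$ and $\#_B(b_n)=n$. Because $a_n\in A$ (hence $a_n\notin B$) and $b_n\in B$ (hence $b_n\notin A$), the endpoints never contribute, and counting gives
\[
c_n=\#_A(b_n)-\#_A(a_n)=(b_n-n)-n=b_n-2n,\qquad r_n=\bigl(\#_B(b_n)-1\bigr)-\#_B(a_n)=(n-1)-(a_n-n)=2n-1-a_n.
\]

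The last step is identification. Since $2n$ is an integer, $c_n=[n\beta]-2n=[n(\beta-2)]$, and from $\alpha=\tfrac\beta{\beta-1}$ one checks $\beta-2=\tfrac\beta\alpha-1=\tfrac{2-\alpha}{\alpha-1}=\gamma$, so $c_n=[n\gamma]$. For $r_n$, using that $n\alpha$ is irrational so $[n\alpha]=-1-[-n\alpha]$, we get $r_n=2n+[-n\alpha]=[\,2n-n\alpha\,]=[n(2-\alpha)]=[n\rho]$, with $2-\alpha=1-\tfrac\alpha\beta$. Both slopes are positive (as $1<\alpha<2<\beta$), so $C$ and $R$ are genuine Beatty sequences, and Corollary~\ref{corollary1} follows by substituting these forms into \eqref{mespropo1}. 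As a sanity check this reproduces the example in the introduction, with $\alpha=\sqrt3$, $\beta=\tfrac{3+\sqrt3}2$, $\gamma=\tfrac{\sqrt3-1}2$, $\rho=2-\sqrt3$.

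I do not anticipate a genuine obstacle; the argument is short and elementary. The only places requiring care are the strict-versus-weak inequalities in the counting step — verifying that neither endpoint is an interior lattice point of $(a_n,b_n)$, so that no off-by-one slips in when moving between $\le$ and $<$ (the single $-1$ in the formula for $r_n$ being the subtle one) — and applying the identity $[x]+[-x]=-1$ only to the irrational numbers $x=n\alpha$ and $x=n\beta$, where it is legitimate.
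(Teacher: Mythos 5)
Your argument is correct, and while part (1) matches the paper's (equally brief) observation, your proof of part (2) takes a genuinely different route. The paper first shows that the difference sequences $r_{n+1}-r_n$ and $c_{n+1}-c_n$ take only two values and are Sturmian (Lemma \ref{mesprop5} and Lemma \ref{CisBeatty}), concludes from this that $R$ and $C$ are Beatty (Corollary \ref{mescor2}), and only then identifies the slopes by matching the first differences of $[(2-\alpha)n]$ and $[\tfrac{2-\alpha}{\alpha-1}n]$ against $a_{n+1}-a_n$ and $b_{n+1}-b_n$ (Theorems \ref{mesprop6} and \ref{slopeofC}); a related counting identity via characteristic functions appears separately in Theorem \ref{mesprop6a}. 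You instead obtain closed forms $c_n=b_n-2n$ and $r_n=2n-1-a_n$ in one step from the complementarity identity $\#_A(m)+\#_B(m)=m$ and then pull the integer out of the floor. Your route is shorter and more self-contained: it bypasses the somewhat delicate inference in Corollary \ref{mescor2} (that a $\{0,1\}$-valued difference sequence yields a Beatty sequence really uses the balanced/Sturmian property, not merely that only two values occur), it makes the provenance of the slopes transparent ($\gamma=\beta-2$, $\rho=2-\alpha$), and it subsumes the independent proof of Corollary \ref{corollary1} given in Section \ref{alternativeproof}. What the paper's longer route buys is the explicit Sturmian structure of the first differences of $C$ and $R$, which is reused later in the MES analysis (Lemma \ref{mesprop8} and part (4) of Theorem \ref{partial1}). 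One small caveat, shared with the paper: part (1) as stated for arbitrary complementary pairs implicitly assumes $a_n<b_n$ for all $n$ (otherwise $(a_n,b_n)$ is empty and the identity fails); your remark fixing the indexing so that the $A$-sequence is the smaller one is the right way to read it.
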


The paper is outlined as follows. In the next section, we provide a short proof of corollary \ref{corollary1} that is independent of the combinatorial proof promised above for theorem \ref{partial2}, ending the section by highlighting the importance of the subject. For a deeper understanding of Theorem 1.2 that links to our combinatorial interpretation, in Section 3 we state the MEX algorithm, familiar from the theory of impartial games, and generalize it introducing the novel MES algorithm. In Section 4, we prove the full version of the partial decomposition theorem (which includes Theorem \ref{partial2}) and explore further implications of the MES algorithm.

\section{A Short independent proof of Corollary \ref{corollary1}}\label{alternativeproof}

Before providing an easy proof for Corollary \ref{corollary1}, we want to highlight two important facts. 1) The proof gives no indication as to where the values $\gamma= \frac{2-\alpha}{\alpha-1}$ and $\rho=2-\alpha$ used in this theorem are coming from. It simply shows these two values work, but the proof does not tell us the intrinsic reasons why they work. 2) This easy proof leaves us with no way to connect the sequence $c_n$ and $r_n$ combinatorially with the sequences $A$ and $B$ respectively. In contrast the Partial Decomposition Theorem links $\gamma$ and $\rho$ back to $A$ and $B$, and gives us the natural combinatorial interpretation of $c_n$ and $r_n$. 


\begin{proof}[Proof of Corollary \ref{corollary1}]
We would like to prove that  $k:=[ \beta n ]$ is equal to the sum of the integers 
$t:=[ \alpha n ]$, \; $r:=[ \frac{2-\alpha}{\alpha -1} n +1]$ and $q:=[ (2-\alpha) n ]$.
Note that from $\displaystyle \frac{1}{\alpha}+\frac{1}{\beta}=1$,
we have
\small{\begin{equation} 
\label{mes5}
\frac{2-\alpha}{\alpha-1}=\frac{2-\alpha}{\alpha(1-\frac{1}{\alpha})}=\frac{(2-\alpha)\beta}{\alpha}=(2-\alpha)(\beta-1)=2\beta-2-\alpha\beta +\alpha=2\beta-2-\alpha-\beta +\alpha=\beta-2\text{.}
\end{equation}}
 
Now, we know that
\begin{align} 
&[ \beta n ]=k,  &\mbox{if and only if} \hspace{1cm} &k< \beta n  < k+1\text{,} \label{eq:mes5}\\
&\left[\frac{2-\alpha}{\alpha -1} n +1\right]=r  &\mbox{if and only if} \hspace{1cm}&  r< \frac{2-\alpha}{\alpha -1} n +1 < r+1\text{,} \; \label{eq:mes6}\\
&[ \alpha n ]=t  &\mbox{if and only if} \hspace{1cm} & t<\alpha n < t+1\text{,} \hspace{1cm}\mbox{and}  \label{eq:mes7}\\
&[ (2-\alpha) n ]=q   &\mbox{if and only if} \hspace{1cm}& q< (2-\alpha) n  < q+1 \text{.}\label{eq:mes8}
\end{align}

It follows from \eqref{eq:mes8} that $2n-q-1< \alpha n <2n-q$ and thus, by \eqref{eq:mes7}, $2n-q-1=t\text{,}$ or 
\begin{equation}
 \label{mes9}
2n-1=t+q\text{.}
\end{equation}
From \eqref{mes5} and \eqref{eq:mes6} we find that $r+2n-1<\beta n <2n+r$, hence from 
\eqref{eq:mes5} we deduce that $2n-1+r=k$ and this together with \eqref{mes9} gives $k=t+q+r$.
\end{proof}

To prove the results of this paper, we use combinatorial arguments and standard properties of Beatty and Sturmian sequences. For $\alpha>1$, the Sturmian sequence with slope $1/\alpha$ encodes the Beatty sequence with slope $\alpha$ \cite[Lemma 9.1.3 ]{allouche2003automatic}. There is also a corresponding relationship between non-homogeneous Beatty and Sturmian sequences \cite[Lemma 1]{bosmadekkingsteiner2018}. Sturmian sequences in general, and Beatty sequences in particular, are the focus of a growing amount of research, as they play a role in various fields of mathematics \cite{obryant2002generating}, biology \cite{hata2014neurons}, music \cite{noll2008sturmian}, computer science \cite{bruckstein1991self}, and physics (see also \cite{allouche2003automatic}, \cite{lothaire2002algebraic}, \cite{berstel1996recent},  \cite{blanchet2013counting} and the references therein). The name Sturmian was introduced by Hendlund and Morse in their influential work in the 1940's \cite{morse1940symbolic}, however, the history of these sequences dates back to 1772 when Bernoulli III worked with what is now known as the inhomogeneous Sturmian sequence (see \cite{allouche2003automatic}). In pure mathematics, Sturmian and Beatty sequences have been studied in relation to dynamical systems \cite{morse1940symbolic}, fix point morphisms \cite{stolarsky1976beatty}, logarithm of irrational numbers \cite{polanco2017logarithm}, prime numbers \cite{banks2007prime}, algebraic numbers \cite{fraenkel1994iterated}, arithmetic functions \cite{abercrombie2009arithmetic}, primitive roots, divisor functions, character sums, among others (see \cite{abercrombie1995beatty},   \cite{allouche2019generalized}, \cite{ballot2017functions},  \cite{hildebrand2018almost}, \cite{obryant2002generating}, \cite{porta1990half}, \cite{tang2010prime}, and the references therein). More recently, the second and third moment of Beatty sequences and its squares have also been studied \cite{luca2018variation}. We study differences of Beatty sequences in relation to the well known Minimun Excluded Algorithm that is widely used in combinatorial game theory, coloring algorithm, and else where (see for instance \cite{andrews2019partitions}, \cite{brettell2022excluded}, \cite{conway2000numbers}, \cite{illingworth2022product} and \cite{welsh1967upper}).

\section{Minimun Excluded (MEX) and Minimun Excluded with Skipping (MES) Algorithms}\label{messalgorithm}

We present the well known MEX algorithm, as well as definitions and notations necessary to state the more complete version of the main theorem at the end of this section.

\begin{definition}
\label{definitionmex}
Given a set $S\subset \mathbb{N}$  the $\mex$ of the set $S$ is defined as $\mex(S):=\min  (S^c\cap\mathbf{N})$, i.e., the $\mex(S)$ is the minimum natural number that is not in $S$.  
\end{definition}



It is well known that complementary sequences can be generated using the $\mex$ rule above (see for instance \cite{fraenkel1994iterated}). We call this process the MEX algorithm, and define it more precisely below. In the rest of the paper, if $A$ is a sequence we use $A_n$ to denote the set $A_n=\{ a_k\; :\; k\le n\}$. 

\begin{definition}[Mininmun Excluded Algorithm MEX]
The following set of algorithmic recursions is known as the MEX algorithm, and can be used to generate complementary sequences. The input is a sequence ``H''.\\
$\bullet$ STEP 1: take $a_1=1$,\\
$\bullet$ STEP 2: if $n\ge2$, take $a_n=\mex \{ a_i,b_i\,|\, i<n\}=\mex(A_{n-1}\cup B_{n-1})$, and\\
$\bullet$ STEP 3: if $n\ge1$, take $b_n=a_n+h_n$.\\
$\bullet$ STEP 4: repeat steps $2$ and $3$.\\
\end{definition}

To our knowledge, the first documented use of the MEX algorithm to generate Beatty sequences combinatorially was by British mathematician Willem A. Wythoff in 1907 \cite{wythoff1907modification}. He did so with the purpose of generalizing the combinatorial game Nim. He showed that the winning positions of his game were given by the Beatty pairs $(a_n,b_n)$, defined by the golden ratio. In our context, this corresponds to taking $h_n=n$ in the MEX algorithm above. For the purpose of illustration, let us run the first few rounds of the algorithm with $h_n=n$. We apply STEPS 1, 2 and 3 and get $a_1=1$, and $b_1=a_n+n=1+1=2$, so $A_1=\{1\}$ and $B_1=\{2\}$. Applying STEP 4,  we first obtain $a_2=\mex(A_1\cup B_1)=3$ and $b_2=a_n+n=3+2=5$, so that $A_2=\{1,3\}$ and $B_2=\{2,5\}$. If we follow this process the first five iterations will give Table \ref{messtable4}.\\

\begin{table}[h]
\begin{tabular}{l l l l}
\hline
$\mex(A_{n-1}\cup B_{n-1})$   &   $b_n=a_n+n$   &   $A_n=\{a_k:k\le n \}$ &  $B_n=\{b_k:k\le n \}$\\\hline
$a_1=1$ & $b_1=1+1=2$  & $A_1=\{1\}$   & $B_1=\{2\}$\\\hline
$a_2=3$ & $b_2=3+2=5$  & $A_2=\{1,3\}$   & $B_2=\{2,5\}$\\\hline
$a_3=4$ & $b_3=4+3=7$  & $A_3=\{1,3,4\}$   & $B_3=\{2,5,7\}$\\\hline
$a_4=6$ & $b_4=6+4=10$  & $A_4=\{1,3,4,6\}$   & $B_4=\{2,5,7,10\}$\\\hline
$a_5=8$ & $b_5=8+5=13$  & $A_5=\{1,3,4,6,8\}$   & $B_5=\{2,5,7,10,13\}$\\\hline
\end{tabular}
\caption{first few iterations of the MEX algorithm }
\label{messtable4}
\end{table}

Note that the set $A_n$ agrees with the first $n$ terms of the sequence $A$ given by the golden ratio. Similarly, $B_n$ agrees with the first  $n$ terms of its complementary Beatty sequence. \\

 We now generalize the MEX algorithm by modifying STEP 3, and call this generalization the Minimun Excluded with Skipping (MES) Algorithm. To find $b_n$, we choose the minimum excluded element, but after a number of integers are skipped. We base the MES in the following generalization of definition \ref{definitionmex}

\begin{definition}
Given a set of integers $A$ and a nonnegative integer $k$ we define a function denoted by $\mex_k(A)$ that consists on selecting the $(k+1)$st minimun excluded positive integer from the set $A$. In other words, to find $\mex_k(A)$ we skip the first $k$ excluded integers from $A$ and select the next excluded one.
\end{definition}

\begin{example}
For instance if $A=\{2,4,6,8,\dots\}$, then $\mex_3(A)=7$, since $1,3 \text{ and } 5$ are the first $3$ excluded integers from $A$, and $7$ is the next one excluded. Note that with the above definition, $\mex_0(A)=\mex(A)$, i.e., if we don't have to skip any excluded integers, we are simply applying the $\mex$ rule described above. 
\end{example}


\begin{definition}[Minimun Excluded with Skipping MES Algorithm]
Given a sequence of nonnegative integers $C=\{c_n\}_{n=1}^{\infty}$, the MES algorithm is given by the following steps
\begin{itemize}
\item STEP 1: set $a_1=1$, $A_1=\{a_1\}=\{1\}$ and $B_0=\emptyset$. 
\item STEP 2: for $n\ge2$, $a_n=\mex(A_n\cup B_{n})$. 
\item STEP 3: for $n\ge 1$, $b_n=\mex_{c_n}(A_n\cup B_{n-1})$, i.e., skip the first $c_n$ excluded positive integers and select the next excluded integer from this union.
\item STEP 4: iterate steps 2 and 3.
\end{itemize}
\end{definition}

\begin{example}
\label{goldenMESexample}
We illustrate this algorithm using $C=\{0,1,1,2,3,3,4,4,5,6,6,7,8,8\dots\}$. We start by setting $B_0=\emptyset$. For the first iteration $a_1=1$, and $b_1=\mex_{c_1}(A_1\cup B_0)=\mex_0\{ 1\}=2$. So, $A_1=\{1\}$ and $B_1=\{2\}$. The second iteration gives $a_2=\mex(A_1\cup B_1)=3$, and $b_2=\mex_{c_2}(A_2\cup B_1)=\mex_1\{1,2,3\}=5$, since $5$ is the second minimun excluded integer, i.e., we skip the first excluded integer, $4$, and select the next, $5$. Next iteration gives $a_3=\mex(A_2\cup B_2)=4$, and $b_3=\mex_{c_n}(A_3\cup B_2)=\mex_1\{1,2,3,4,5\}=7$, since we skip the first minimun excluded integer $6$, and select the next excluded integer $7$. If we continue in this process, we get the rows in Table \ref{messtable5}. 
 \end{example}

\begin{table}[h]
$C=\{0,1,1,2,3,3,4,4,5,6,6,7,8,8,9,9,10\dots\}$\\
$a_n=\mex (A_{n-1}\cup B_{n-1})$\\
$b_n=\mex_{c_n}(A_n\cup B_{n-1})$\\
\begin{tabular}{l l l l}
\hline
$a_n$   &   $b_n$   &   $A_n=\{a_k:k\le n \}$ &  $B_n=\{b_k:k\le n \}$\\\hline
$a_1=1$ & $b_1=\mex_0=2$  & $A_1=\{1\}$   & $B_1=\{2\}$\\\hline
$a_2=3$ & $b_2=\mex_1=5$  & $A_2=\{1,3\}$   & $B_2=\{2,5\}$\\\hline
$a_3=4$ & $b_3=\mex_1=7$  & $A_3=\{1,3,4\}$   & $B_3=\{2,5,7\}$\\\hline
$a_4=6$ & $b_4=\mex_2=10$  & $A_4=\{1,3,4,6\}$   & $B_4=\{2,5,7,10\}$\\\hline
$a_5=8$ & $b_5=\mex_3=13$  & $A_5=\{1,3,4,6,8\}$   & $B_5=\{2,5,7,10,13\}$\\\hline
$a_6=9$ & $b_6=\mex_3=15$  & $A_6=\{1,3,4,6,8,9\}$   & $B_6=\{2,5,7,10,13,15\}$\\\hline
$a_7=11$ & $b_7=\mex_4=18$  & $A_7=\{1,3,4,6,8,9,11\}$   & $B_7=\{2,5,7,10,13,15,18\}$\\\hline
$a_8=12$ & $b_8=\mex_4=20$  & $A_8=\{1,3,4,6,8,9,11,12\}$   & $B_5=\{2,5,7,10,13,15,18,20\}$\\\hline

\end{tabular}
\caption{first few iterations of the MEX algorithm }
\label{messtable5}
\end{table}

\begin{remark}
\label{remark3}
Because of how it is used in STEP 3 of the MES definition above, we call $C$ the skipping sequence. After a close look at the resulting Table \ref{tab:t1}, one can see that the sequence $C$ used in the MES above can be defined inductively by the rule: a positive integer that has been assigned by the algorithm to $A$ appears twice in $C$, otherwise it appears once. In other words, the algorithm can be generated without explicitly listing the sequence $C$, only using a) the initial value $c_0=0$, and b) the inductive rule just stated. Tables \ref{tab:t0}, \ref{tab:t2} and \ref{tab:t1} illustrate how $C$ is generated inductively in this way. Later we expand on this showing that the resulting sequence from the MES algorithm gives the complementary pair given by the golden ratio. This is Theorem \ref{partial1} (4a). 
\end{remark}

\begin{center}

\begin{table}
\parbox{.3\linewidth}{
\centering
\caption{}
\label{tab:t0}       
 \begin{tabular}{l l l l}
 \hline\noalign{\smallskip}
$n^{th}$  &   A   &   B   &   C \\
\noalign{\smallskip}\hline\noalign{\smallskip}
 1)  &   1   &   2   &   0 \\
 2)  &       &       &   1 \\
 3)  &       &       &   1\\
 4)  &       &       &   2\\
 \noalign{\smallskip}\hline
 \end{tabular}
 }\hfill
 \parbox{.3\linewidth}{
 \centering
 \caption{}
 \label{tab:t2}
  \begin{tabular}{l l l l}
 \hline\noalign{\smallskip}
$n^{th}$  &   A   &   B   &   C \\
\noalign{\smallskip}\hline\noalign{\smallskip}
 1)  &   1   &   2   &   0 \\
 2)  &   3   &   5   &   1 \\
 3)  &   4   &   7   &   1 \\
 4)  &       &       &   2 \\
 5)  &       &       &   3 \\
 6)  &       &       &   3 \\
 7)  &       &       &   4 \\
 8)  &       &       &   4\\
 9)  &       &       &   5 \\
\noalign{\smallskip}\hline
 \end{tabular}
 }\hfill
  \parbox{.3\linewidth}{
  \centering
  \caption{}
  \label{tab:t3}
  \begin{tabular}{l l l l}
 \hline\noalign{\smallskip}
$n^{th}$  &   A   &   B   &   C \\
\noalign{\smallskip}\hline\noalign{\smallskip}
 1)  &   1   &   2   &   0 \\
 2)  &   3   &   5   &   1 \\
 3)  &   4   &   7   &   1 \\
 4)  &   6   &   10  &   2 \\
 5)  &   8   &   13  &   3 \\
 6)  &   9   &   15  &   3 \\
 7)  &   11  &   18  &   4 \\
 8)  &   12  &   20  &   4 \\
 9)  &   14  &   23  &   5 \\
 10)  &   16  &   26  &   6 \\
 11)  &   17  &   28  &   6 \\
 12)  &   19  &   31  &   7 \\
\noalign{\smallskip}\hline
 \end{tabular}
\label{tab:t1}
}\hfill
\end{table}
\end{center}
\begin{definition}
\label{sortjoin}
    Define the sortjoin of two integer sequences $A$ and $B$, denoted by $A\star B$, as the union with repetition of the ordered elements of these two sequences.
\end{definition} 

We use $A^2$ for the sortjoin of $A$ with itself i.e., $A^2=A\star A$, and we represent the sortjoin of $k$ copies of $A$ with itself by $A^k$. Finally, we use $\mathbb{Z}_{\{0\}}$ to denote the nonnegative integers. For instance if $A$ represents the even natural numbers, then $A\star \mathbb{Z}_{\{0\}}^2=\{0,0,1,1,2,2,2,3,3,4,4,4,5,5,\cdots\}$.

We need two more definitions before proving the main theorem. Part (4) of that theorem (below) mentions the case when a sequence $C$ is given by the sortjoin $C=D\star \mathbb{Z}_0^{k-1}$ in the MES algorithm. In this case, the sequence $C$ is completely determined by the sequence $D$. Since $C$ defines the MES algorithm we effectively have that $D$ defines the MES algorithm. We record this in the following definition.
 
 \begin{definition}
 If there is an increasing sequence of natural numbers $D$ such that the sequence $C$ of the MES algorithm is given by $C=D\star \mathbb{Z}_0^{k-1}$, we call $D$ the defining sequence of the MES algorithm.
 \end{definition}

We also need the following:
\begin{definition}
\label{mescountingsequence}
For $n\ge 1$, consider the (possibly empty) half-open integer interval $(a_n,b_{n-1}]$, and let $r_n$ be the cardinality of the intersection of this interval with $B_{n-1}$, i.e., $r_n=\#\{ k\;|\; k\in (a_n,b_{n-1}] \text{ and } k \in B_{n-1}\}$. We define the auxiliary sequence $R$ by $\displaystyle R:=\{r_n\}_{n=1}^{\infty}$. Also we define $R_n:=\{r_k\;:\; k\le n\}$.
\end{definition}

\section{Partial Decomposition of Beatty sequences and the MES algorithm: Proof of the main theorem}\label{partialandbeatty}

We now state the main theorem, and include parts (1) and (2) stated in Section 1 for completion.

\begin{theorem}[Partial Decomposition of  Differences of Complementary Sequences, Full Version]
\label{partial1}
Suppose that $A$ and $B$ are any pair of complementary sequences. Then
\begin{enumerate}
\item The sequence of difference $A-B$, term by term, can always be decomposed in terms of the sum of two other sequences $C$ and $R$ as follows: 
\begin{equation}
\label{mesprop4}
b_n-a_n=c_n+r_n+1\text{,}
\end{equation}

\noindent where $c_n$ counts the number of integers strictly between $a_n$ and $b_n$ that belong to the sequence $A$, and $r_n$ count the number of integers strictly between $a_n$ and $b_n$ that are in the sequence $B$.  
\item If the two complementary sequences $A$ and $B$ are also Beatty sequences given by $\alpha\text{ and }\beta$ respectively, with $1<\alpha<2$, then $C$ and $R$ in equation \eqref{mesprop4} are Beatty sequences whose $nth$ terms are given respectively by the slopes $\gamma=\frac{2-\alpha}{\alpha-1}$ and $\rho=2-\alpha$. 
\item There is a combinatorial interpretation of formula \eqref{mesprop4}, that is, an algorithm which we name the Minimun Excluded with Skipping Algorithm (MES), that takes as input a sequence $C$ and it outputs three sequences $A, B$ and $R$ such that if $a_n, b_n, c_n$ and $r_n$ are the $nth$ term of $A,B,C$ and $R$ respectively, then equation \eqref{mesprop4} holds, i.e., 
$$ b_n-a_n=c_n+r_n+1\text{.}$$
Furthermore, the MES characterizes complementary sequences. Specifically, $A$ and $B$ are complementary sequences, if and only if,  there is a nonnegative sequence $C$ that generate $A$ and $B$ through the MES algorithm.
\item The MES generalizes the MEX algorithm as follows: suppose there is an increasing sequence of positive integers $D$ such that $C=D\star \mathbb{Z}_0^{k-1}$ for some $k\in \mathbb{N}$. Then we have the following
\begin{enumerate}
\item If  $k=2$ and $D$ is the Beatty sequence defined by the golden ratio, then the MES and the MEX algorithm coincide, i.e., they both produce $A$. In other words, the MES produces the sequence $A$ if and only if the skipping sequence $C$ can be defined as follows: any nonnegative integer $c$ appears  once or twice in $C$, and $c$ appears twice in $C$ if and only if $c\in A$.
\item  If $D$ is the sequence given by $\alpha$ where 
$$\alpha=\frac{k-1+\sqrt{k^2+1}}{k}\text{,}$$
then the MES algorithm produces the Beatty sequence $D$ and its complement. In other words, the MES produces the sequence $D$ if and only if the skipping sequence $C$ can be defined as follows: any nonnegative integer $c$ appears  $k$ or $k-1$ times in $C$, and $c$ appears $k$ times in $C$ if and only if $c\in D$. This coincides with the MEX and the golden ratio when $k=2$.
\end{enumerate}
\end{enumerate}
\end{theorem}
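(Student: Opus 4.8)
The plan is to establish the four parts in order, since each uses the preceding ones.

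\textbf{Parts (1) and (2).} Part (1) is pure counting: since $A\sqcup B=\N$, each of the $b_n-a_n-1$ integers strictly between $a_n$ and $b_n$ (here using $a_n<b_n$, which holds for the complementary sequence containing $1$ when $\alpha<\beta$) lies in exactly one of $A,B$, so $b_n-a_n-1=c_n+r_n$. For Part (2) I would identify $c_n$ and $r_n$ individually rather than rederive the arithmetic of \eqref{mes5}. Complementarity gives $|A\cap[1,m]|+|B\cap[1,m]|=m$ for all $m$, and $a_n\in A$, $b_n\in B$ give $|A\cap[1,a_n]|=n=|B\cap[1,b_n]|$; hence $c_n=|A\cap(a_n,b_n)|=(b_n-n)-n=b_n-2n$ and $r_n=|B\cap(a_n,b_n)|=(n-1)-(a_n-n)=2n-1-a_n$. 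Substituting $a_n=\lfloor n\alpha\rfloor$, $b_n=\lfloor n\beta\rfloor$ and using $\gamma=\beta-2$ from \eqref{mes5}, we get $\lfloor n\gamma\rfloor=\lfloor n\beta\rfloor-2n=c_n$; and since $n\alpha\notin\Z$, $\lfloor n\rho\rfloor=\lfloor 2n-n\alpha\rfloor=2n-\lceil n\alpha\rceil=2n-1-\lfloor n\alpha\rfloor=r_n$.

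\textbf{Part (3).} For ``$\Leftarrow$'' (every MES output is complementary), an easy induction on the recursion of STEP~2, $a_n=\mex(A_{n-1}\cup B_{n-1})$, shows $\{1,\dots,a_n\}\subseteq A_n\cup B_{n-1}$ and $a_n$ strictly increasing, so $b_n=\mex_{c_n}(A_n\cup B_{n-1})>a_n$ with the $c_n$ skipped integers all lying in $(a_n,b_n)$; disjointness $\{a_n\}\cap\{b_n\}=\emptyset$ holds because each $a_n$ is excluded from $A_{n-1}\cup B_{n-1}$ and from every later $A_m\cup B_{m-1}$, and the $a$'s and $b$'s are internally distinct. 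For covering, $\mex(A_n\cup B_n)$ is the least positive integer not yet placed and equals $a_{n+1}$, which is then placed, so $\mex(A_n\cup B_n)$ is strictly increasing, hence $\to\infty$, giving $A\cup B=\N$. The identity $b_n-a_n=c_n+r_n+1$ then drops out by counting the integers in $(a_n,b_n)$: exactly $c_n$ are excluded from $A_n\cup B_{n-1}$, the remaining $b_n-a_n-1-c_n$ lie in $B_{n-1}$, and these are precisely the ones counted by $r_n$ in Definition~\ref{mescountingsequence}. For ``$\Rightarrow$'', given complementary $A,B$ with $1\in A$ and $a_n<b_n$ for all $n$, set $c_n:=|A\cap(a_n,b_n)|$: then $\mex(A_{n-1}\cup B_{n-1})=\min(a_n,b_n)=a_n$, and among the integers $<b_n$ the ones missing from $A_n\cup B_{n-1}$ are exactly the elements of $A$ inside $(a_n,b_n)$, so a direct induction shows that MES with this $C$ reproduces $A$ and $B$.

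\textbf{Part (4).} Both (4a) and (4b) reduce to one multiset identity. The pair $D,\bar D$ (Beatty, slopes $\alpha<\beta$, with $1<\alpha<2$ for the stated $\alpha$) is, by ``$\Rightarrow$'' of (3), generated by a unique $C$, and by (2) that $C$ is $c_n=\lfloor n\gamma\rfloor$ with $\gamma=\beta-2=\tfrac{2-\alpha}{\alpha-1}$. Eliminating $\beta$ and using the minimal polynomial $k\alpha^2-2(k-1)\alpha-2=0$ of $\alpha$ yields the two collapses $1/\gamma=k\alpha/2$ and $1/\theta=\alpha$, where $\theta:=\tfrac{k\alpha}{2}-(k-1)\in(0,1)$. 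Hence the multiplicity of a value $v$ in $C$ is the number of integers in $[v/\gamma,(v+1)/\gamma)$, which (all endpoints irrational) equals $\lfloor(v+1)\tfrac{k\alpha}{2}\rfloor-\lfloor v\tfrac{k\alpha}{2}\rfloor=(k-1)+\big(\lfloor(v+1)\theta\rfloor-\lfloor v\theta\rfloor\big)\in\{k-1,k\}$, and it is $k$ exactly when $v+1$ is a jump point of $w\mapsto\lfloor w\theta\rfloor$, i.e.\ $v+1=\lceil j/\theta\rceil=\lceil j\alpha\rceil=\lfloor j\alpha\rfloor+1$ for some $j$, i.e.\ exactly when $v\in D$; the base value $v=0$ has multiplicity $\lfloor k\alpha/2\rfloor=k-1$ and $0\notin D$. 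This says precisely $C=D\star\Z_0^{k-1}$, which is (4b); and (4a) is the case $k=2$, $\alpha=\phi$, where this same $C$ is what one reads off the MEX run with $h_n=n$, since $b_n-a_n=n$ forces $c_n=b_n-2n=a_n-n$.

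\textbf{Where the difficulty lies.} The delicate points are: (i) making the induction in ``$\Rightarrow$'' of (3) actually close, and in particular reconciling the ``strictly between'' description of $r_n$ in Part (1) with the $(a_n,b_{n-1}]$ description in Definition~\ref{mescountingsequence} — this needs $b_{n-1}<b_n$ along the run, hence the normalization $a_n<b_n$ for all $n$ (automatic for Beatty pairs with $\alpha<\beta$, but the ``iff'' of (3) as literally stated should be read with this normalization); and (ii) in Part (4), identifying the ``large-gap'' positions of the slope-$k\alpha/2$ Beatty sequence with $D$ itself, where the algebraic collapses $1/\gamma=k\alpha/2$ and $1/\theta=\alpha$ do all the work and the index shift $v\leftrightarrow v+1$ is the one place an off-by-one error could hide.
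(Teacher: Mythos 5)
Your proposal is correct, and in two of the four parts it takes a genuinely different route from the paper. For Part (2) the paper works with first differences: it shows $r_{n+1}-r_n=a_{n+1}-a_n-1$ (Lemma \ref{mesprop5}), matches this against the first differences of $\left[(2-\alpha)n\right]$ (Theorem \ref{mesprop6}), and invokes Corollary \ref{mescor2} to conclude $R$ is Beatty, with $C$ handled ``in a similar manner'' (Theorem \ref{slopeofC}); your counting identity $|A\cap[1,m]|+|B\cap[1,m]|=m$ instead yields the closed forms $c_n=b_n-2n$ and $r_n=2n-1-a_n$ outright, which pins down the sequences themselves rather than only their increments (the paper's passage from matching differences to equality of sequences tacitly needs the initial terms to agree) and makes the slope identifications $\gamma=\beta-2$ and $\rho=2-\alpha$ one-line floor computations. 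For Part (4) the paper goes through the continued-fraction description of $\delta$ (Lemma \ref{mesprop8}), Theorem \ref{mesprop11}, and the fixed-point substitution $\alpha=\delta$ in \eqref{mes21a}; you reach the same fixed-point equation $k\alpha^2-2(k-1)\alpha-2=0$ but then verify $C=D\star\Z_0^{k-1}$ directly by computing the multiplicity of each value $v$ in the slope-$\gamma$ Beatty sequence via the collapses $1/\gamma=k\alpha/2$ and $1/\theta=\alpha$, which is self-contained and bypasses the repetition-count machinery of Lemmas \ref{meslem9}--\ref{mesprop3}. Your Parts (1) and (3) follow the paper's intent but supply the induction that the paper compresses into ``by construction,'' and your observation that the general (non-Beatty) case of Parts (1) and (3) requires the normalization $a_n<b_n$ for all $n$ is a point the paper leaves implicit; it is worth recording, since for arbitrary complementary pairs the signed identity $b_n-a_n=c_n+r_n+1$ can fail without it.
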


In this section, we are going to prove the Partial Decomposition Theorem. Parts (1) and (3) are the simplest to prove, and we do so immediately. We complete the proof of Part (2) in Theorems \ref{mesprop6} and \ref{slopeofC}. Finally, we prove Part (4) of the main theorem in Corollary \ref{corofinal}.

\begin{proof}[Proof of Parts (1) and (3) of Theorem \ref{partial1}]
Part 1 of Theorem \ref{partial1} is evident, by virtue of the fact that $A$ and $B$ are complementary, and thus, the integers in the interval $(a_n,b_n)$ will belong either to $A$ or $B$, hence the stated formula in part 1 must follow. Part 3 of the theorem follows by construction of the MES algorithm. On the one hand, given a nonnegative sequence $C$ the MES can be implemented, skipping $c_n$ excluded integers in the $nth$ step. By construction it will produce complementary sequences. Conversely, if $C$ is negative, the skipping cannot be performed in the natural sense that is done in the algorithm. On the other hand, if a pair of complementary sequences is given, one can define $r_n$ as in Definition \ref{mescountingsequence}.  Then $c_n$ can be defined as the following complement: $[(a_n,b_n)\cap B_n]^c$. With this set up, we can implement the algorithm with $C=\{ c_n\}_{n=1}^{\infty}$. From which it will clearly follow that if $R=\{ r_n\}_{n=1}^{\infty}$, then $b_n-a_n=c_n+r_n+1$.  
\end{proof}
We spend the rest of this section proving parts 2) and 4) of Theorem (\ref{partial1}). We do so through a sequence of lemmas regarding Sturmian and Beatty sequences. Given a Beatty sequence with slope $0<\alpha<1$, each positive integer will occur in the sequence $k$ 
times or $k+1$ times, for some number $k\ge 1$. This is simply a consequence of the size of $\alpha$. We have the following:

\begin{lemma}
\label{meslem9}
If $\alpha$ belongs to the interval $\frac{1}{k+1}<\alpha < \frac{1}{k}$ for $k$ a positive integer, then all nonnegative integers appear in the Beatty sequence $C$ with slope $\alpha$, each of them repeating $k$ or $k+1$ times. Both occur infinitely often. In the language of sortjoin, there exists, in this case, a sequence $B$ of nonnegative integers such that $C=B\star\mathbb{Z}_0^{k-1}$.
\end{lemma}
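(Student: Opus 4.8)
\textbf{Proof proposal for Lemma \ref{meslem9}.}

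The plan is to analyze the Beatty sequence $C$ with slope $\alpha$ directly via the floor function. Write $c_n = \lfloor n\alpha \rfloor$ for $n \ge 0$ (so $c_0 = 0$), and for a nonnegative integer $m$ let $N(m) = \#\{ n \ge 1 : \lfloor n\alpha \rfloor = m \}$ count the multiplicity of $m$ in the sequence. First I would observe that $\lfloor n\alpha \rfloor = m$ exactly when $m \le n\alpha < m+1$, i.e. when $n$ lies in the half-open interval $[m/\alpha, (m+1)/\alpha)$ of length $1/\alpha$; since $\alpha$ is irrational the endpoints are never integers, so $N(m)$ is the number of integers in an open interval of length exactly $1/\alpha$. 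It is a standard fact that such a count is either $\lfloor 1/\alpha \rfloor$ or $\lceil 1/\alpha \rceil$. Under the hypothesis $\frac{1}{k+1} < \alpha < \frac{1}{k}$ we have $k < 1/\alpha < k+1$, hence $\lfloor 1/\alpha \rfloor = k$ and $\lceil 1/\alpha \rceil = k+1$, so every nonnegative integer $m$ appears $k$ or $k+1$ times. In particular every nonnegative integer appears (at least once), which gives the first assertion.

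Next I would show both values $k$ and $k+1$ occur infinitely often. The quick way is a counting/density argument: among $m = 0, 1, \dots, M-1$, the total multiplicity $\sum_{m=0}^{M-1} N(m)$ equals the number of $n \ge 1$ with $\lfloor n\alpha \rfloor \le M-1$, which is $\lceil M/\alpha \rceil - 1 = M/\alpha + O(1)$. If only finitely many $m$ had multiplicity $k+1$ (resp. $k$), this sum would be $kM + O(1)$ (resp. $(k+1)M + O(1)$), forcing $1/\alpha = k$ (resp. $1/\alpha = k+1$), contradicting irrationality of $\alpha$ (indeed contradicting $k < 1/\alpha < k+1$). Hence both multiplicities recur infinitely often. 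Alternatively one can invoke the three-distance theorem or the Sturmian description of $\lfloor n\alpha \rfloor$ cited in the paper, but the elementary estimate above suffices and is self-contained.

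Finally, to phrase the conclusion in the language of sortjoin: since each nonnegative integer $m$ occurs exactly $k$ or $k+1$ times in $C$, removing exactly $k-1$ copies of each $m$ leaves a multiset in which each $m$ occurs once or twice; but that is not quite what is claimed. The claim is $C = B \star \mathbb{Z}_0^{k-1}$ for some \emph{sequence} (i.e.\ strictly increasing) $B$ — which would require each $m$ to occur exactly $k$ or $k+1 = k + 1$ times with the "extra" copies beyond $k-1$ forming a valid sortjoin of an increasing $B$ with $\mathbb{Z}_0$; wait, that needs each $m$ to occur $k-1$ or $k$ times after one pass. Let me restate cleanly: define $B$ to be the increasing enumeration of those $m$ with $N(m) = k+1$, and let $\mathbb{Z}_0^{k-1}$ contribute $k-1$ copies of every nonnegative integer; then $B \star \mathbb{Z}_0^{k-1}$ has each $m$ appearing $(k-1) + [\![ N(m) = k+1 ]\!]$ times, which is $k-1$ or $k$, not $k$ or $k+1$. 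So the correct bookkeeping is: take $\mathbb{Z}_0^{k}$ for the base $k$ copies of each integer, and let $B$ be the increasing enumeration of the integers with multiplicity $k+1$, giving $C = B \star \mathbb{Z}_0^{k}$; if instead the paper's normalization omits $c_0$ or shifts indices, the exponent becomes $k-1$ as written. I would therefore pin down the exact indexing convention for $C$ (whether $c_0=0$ is included) at the start, and then the sortjoin identity is immediate from the multiplicity dichotomy. The main obstacle is purely this bookkeeping of exponents and the treatment of the integer $0$ and the index $n=0$ versus $n \ge 1$; the multiplicity dichotomy itself is routine, and the infinitude of both values follows from irrationality as above.
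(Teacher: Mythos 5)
Your proof is correct and follows essentially the same route as the paper: first the dichotomy that each nonnegative integer occurs $k$ or $k+1$ times (you count lattice points in an interval of length $1/\alpha$, the paper uses floor-function superadditivity, but these are the same computation), then an averaging/irrationality argument to show both multiplicities recur infinitely often. Your side-remark about the exponent is well taken --- the natural bookkeeping gives $C=D\star\mathbb{Z}_0^{k}$ with $D$ strictly increasing, which is exactly how the paper itself states the analogous conclusion in Lemma \ref{mesprop2} and Corollary \ref{corsortjoin}; the form $B\star\mathbb{Z}_0^{k-1}$ in Lemma \ref{meslem9} only works if $B$ is allowed to contain each integer once or twice, so your resolution of the convention is the right one.
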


\begin{proof}
It's not hard to see that the inequality $\frac{1}{k+1}<\alpha < \frac{1}{k}$ implies each integer repeats $k$ or $k+1$ times in $\{[ \alpha n ] \}_{n=1}^{\infty}$. Indeed, $k\alpha <1 $ implies each integer 
$k$ occurs at least $k$ times, and an integer can not occur $k+2$ times nor more, for if 
$[ n \alpha ]= [ (n+1) \alpha ]=\cdots=[ (n+k+1) \alpha ]=k$, then 
$0=[ (n+k+1) \alpha ]-[ n \alpha ] \ge [ n \alpha ]+ [ (k+1) \alpha ]-[ n \alpha ]=[ (k+1) \alpha ]>1$.
Now some integers $n$, will indeed need to repeat $k+1$ times, for if all large integers repeat $k$ times, 
then the sequence would be ultimately periodic and then the following equality would need to hold:
$$\displaystyle \lim_{n \to \infty} \frac{[ n \alpha ]}{n}=\frac{1}{k}\text{,}$$
 i.e., $\alpha=1/k$, a contradiction to the irrationality of $\alpha$. For the same reasons it cannot repeat $k+1$ times.
 Hence both $k$ and $k+1$ occur infinitely often.
\end{proof}

\begin{definition}
 \label{thbrassdef1} 
     For a real number $\alpha$, such that $0<\alpha<1$, we define the
characteristic function of $\alpha$ as
           \begin{equation}
               \label{thbrass1} 
                  f_{\alpha}(n)= [ \alpha (n+1) ]-
[ \alpha n ] \text{.}
           \end{equation}
\end{definition}

Clearly $f_{\alpha}(n)=1$ or $f_{\alpha}(n)=0$.  Since the sum
telescopes, we have
\begin{equation} 
\label{thbrass2}
\sum_{n=1}^{m} f_{\alpha}(n) =[ \alpha (m+1) ]\text{.}
\end{equation}
\begin{remark}
Notice that \eqref{thbrass2} gives the number of integers $n\leq m$  for
which $f_{\alpha}(n)=1\text{.}$
\end{remark}
\begin{definition} 
 \label{thbrassdef2}
For $1<\beta \in \mathbf{R \setminus Q}$, define
    \begin{equation}
      \label{thbrass3} 
           g_{\beta}'(n)=
             \begin{cases}
              1 &\textit{if }     n=[ k\beta ],
\textit{for k }\hspace{1mm} \in \hspace{1mm}\mathbf{Z},\\
              0&      \hspace{3mm} \text{otherwise.}
             \end{cases}
    \end{equation}
\end{definition}

It is a well known fact (see \cite{polanco2017logarithm} and \cite[Lemma\ 9.1.3]{allouche2003automatic}) that for all integers $n$
\begin{equation} 
 \label{thbrass4}
 g_{\beta}' (n)= f_{1/\beta} (n)\text{.}
\end{equation}

\begin{lemma}
\label{mesprop2}
For each irrational number $\alpha$ such that $\frac{1}{k+1}<\alpha < \frac{1}{k}$, there exists a number 
$\beta:=\frac{\alpha}{1-\alpha}$, such that $[ n \alpha ]=t$ for $k+1$ different numbers 
$n=m,m+1,...,m+k$, if and only if $[ n \beta ]=t$ for $k$ numbers $n=q,q+1,...,q+k-1$. In other words, if $C=[ n\alpha ]_{n=1}^{\infty}$ and $\displaystyle B=\left[ \frac{\alpha}{1-\alpha}\cdot n \right]_{n=1}^{\infty}$ then $C=D\star \mathbb{Z}_0^{k}$ and $B=D\star \mathbb{Z}_0^{k-1}$ for some increasing sequence $D$ of nonnegative integers.
\end{lemma}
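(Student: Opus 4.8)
The plan is to relate the Beatty sequence $C$ with slope $\alpha$ to the Beatty sequence $B$ with slope $\beta = \alpha/(1-\alpha)$ via the characteristic/indicator functions set up in Definitions \ref{thbrassdef1} and \ref{thbrassdef2} and the identity \eqref{thbrass4}. Note first that $\frac{1}{\beta} = \frac{1-\alpha}{\alpha} = \frac{1}{\alpha} - 1$, so $\frac1\alpha + \frac1\beta$ differ by exactly $1$; in particular, since $\frac{1}{k+1} < \alpha < \frac1k$ we get $k < \frac1\beta < k+1$, i.e. $\frac{1}{k+1} < \beta \cdot \frac{1}{k(k+1)}$... more simply $\frac{1}{k+1}< \frac{1}{1/\beta} $ rearranges to $k < 1/\beta < k+1$, so each integer appears in $C = \{[n\alpha]\}$ exactly $k$ or $k+1$ times (Lemma \ref{meslem9}), and each integer appears in $B=\{[n\beta]\}$ exactly $k-1$ or $k$ times. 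So writing $C = D\star\mathbb{Z}_0^{k}$ and $B = D'\star\mathbb{Z}_0^{k-1}$ for increasing sequences $D, D'$ is automatic from Lemma \ref{meslem9}; the real content is that $D = D'$, i.e. that the integers repeating the \emph{extra} time (the $(k+1)$st for $C$, the $k$th for $B$) are the same.

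The key computation is: the integer $t$ appears in $C=\{[n\alpha]\}$ for the values $n = m, m+1, \dots$ where $m$ is the least $n$ with $[n\alpha] = t$; the number of such $n$ is $f_\beta'$-type data. Precisely, an integer $t$ is \emph{hit} by the Beatty sequence $\{[n\alpha]\}$ — meaning $t = [n\alpha]$ for some $n$ — and the multiplicity of $t$ in $C$ equals $1 + (\text{number of } n \text{ with } [n\alpha]=t) - 1$; what I actually want is to count, among the $f_{1/\beta'}$... Let me restate cleanly. Using \eqref{thbrass2} applied to $\alpha' := 1/\beta = (1-\alpha)/\alpha$ (which lies in $(k, k+1)$ after inverting, so I should instead apply the machinery to the fractional parts), the count of $n \le m$ with $[n\alpha] < [(m+1)\alpha]$ telescopes. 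The cleanest route: the multiplicity of $t$ in the sequence $\{[n\alpha]\}_{n\ge 1}$ is $[(t+1)/\alpha'']- \dots$; rather than chase this, I will argue via the indicator identity \eqref{thbrass4}. The multiplicity of $t$ in $C$ is $k+1$ iff the interval $(t, t+1)$ contains a point of the form $j/\alpha$ ... equivalently $\{t/\alpha\}$... The honest statement is: the value $t$ has multiplicity $k+1$ in $\{[n\alpha]\}$ iff $\lfloor (t+1)/\alpha \rfloor - \lfloor t/\alpha\rfloor = k+1$, and this difference is exactly $f_{1/\alpha}$ summed over a block — but $1/\alpha \in (k,k+1)$, so I reduce mod $1$: write $1/\alpha = k + \theta$ with $\theta = \{1/\alpha\} = 1/\alpha - k$, and check that $\theta = 1/\beta - (k-1)$... indeed $1/\beta = 1/\alpha - 1 = (k-1) + \theta$, so $\{1/\beta\} = \theta$ as well (since $0<\theta<1$). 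Therefore the ``extra repetition'' pattern for $C$ (governed by $f_\theta$ via the fractional part of $1/\alpha$) and for $B$ (governed by $f_\theta$ via the fractional part of $1/\beta$) are driven by the \emph{same} irrational $\theta$ and the \emph{same} starting phase, hence coincide. This gives $D = D'$.

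So the key steps, in order, are: (i) verify $\beta = \alpha/(1-\alpha)$ is irrational and compute $1/\beta = 1/\alpha - 1$, deducing $1/\beta \in (k-1,k)$ and hence via Lemma \ref{meslem9} that $C = D\star\mathbb{Z}_0^{k}$ and $B = D'\star \mathbb{Z}_0^{k-1}$ for increasing $D, D'$; (ii) express the multiplicity of an integer $t$ in each of $C$ and $B$ as $k+1$ (resp. $k$) iff $\lfloor (t+1)\rho \rfloor - \lfloor t\rho\rfloor$ takes the larger value, where for $C$ we take $\rho = 1/\alpha$ and for $B$ we take $\rho = 1/\beta$ — this is where \eqref{thbrass2} and the telescoping are used, counting preimages of $t$; (iii) reduce modulo $1$: since $1/\alpha - 1/\beta = 1 \in \mathbb{Z}$, the fractional parts agree, $\{1/\alpha\} = \{1/\beta\} =: \theta$, so $\lfloor (t+1)/\alpha\rfloor - \lfloor t/\alpha\rfloor = k + f_\theta(t)$ and $\lfloor (t+1)/\beta\rfloor - \lfloor t/\beta\rfloor = (k-1) + f_\theta(t)$ with the \emph{same} $f_\theta(t)\in\{0,1\}$; (iv) conclude that $t$ has the extra repetition in $C$ iff $f_\theta(t) = 1$ iff $t$ has the extra repetition in $B$, so the sequences $D$ (the integers $t$ with $f_\theta(t)=1$, in increasing order) defining $C$ and $D'$ defining $B$ are identical; rename it $D$.

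The main obstacle I anticipate is step (ii): getting the multiplicity-of-$t$ bookkeeping exactly right, i.e. proving carefully that the number of $n\ge 1$ with $[n\alpha] = t$ equals $\lfloor (t+1)/\alpha \rfloor - \lfloor t/\alpha \rfloor$ (one must be careful at the boundary, since $\alpha$ irrational means $n\alpha \ne$ integer, so $[n\alpha]=t \iff t/\alpha < n < (t+1)/\alpha \iff \lceil t/\alpha\rceil \le n \le \lceil (t+1)/\alpha\rceil - 1$, and the count is $\lceil (t+1)/\alpha\rceil - \lceil t/\alpha\rceil = \lfloor(t+1)/\alpha\rfloor - \lfloor t/\alpha\rfloor$ again using irrationality), and similarly for $\beta$ — and then matching the ``$n=m,\dots,m+k$'' indexing in the statement to this count. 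Once the two multiplicity counts are each pinned to the same $f_\theta(t)$, the equivalence and the identification $D=D'$ are immediate.
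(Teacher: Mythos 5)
Your proof is correct and takes essentially the same route as the paper's: both rest on the identity that the multiplicity of $t$ in $\{[n\alpha]\}_{n\ge1}$ equals $[(t+1)/\alpha]-[t/\alpha]$, combined with the fact that $1/\beta=1/\alpha-1$, so the two gap counts differ by exactly $1$ and the ``extra repetition'' is governed by the same Sturmian pattern (your fractional-part reduction $\{1/\alpha\}=\{1/\beta\}=\theta$ is the same observation the paper makes by computing $[(t+1)/\beta]-[t/\beta]=[(t+1)/\alpha]-[t/\alpha]-1$ directly). One small slip to fix: in your exploratory first paragraph you assert $k<1/\beta<k+1$, which should read $k-1<1/\beta<k$, as you correctly state later in step (i).
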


\begin{proof}
Notice that $\frac{1}{k+1}<\alpha < \frac{1}{k}$ if and only if $k < \frac{1}{\alpha}<k+1$, if and only if
 $k-1 < \frac{1}{\alpha}-1<k$ and $\frac{1}{\alpha}-1=\frac{1-\alpha}{\alpha}=\frac{1}{\beta}$. Thus 
we claim that it is enough to prove that the sequence $[ n \alpha ]=t$ for $k+1$ values of $n$, if and only if 
 $[ (t+1) \frac{1}{\alpha} ]-[ t \frac{1}{\alpha} ] =k+1$. Indeed, if this is true, since
 $[ (t+1) \frac{1}{\beta} ]-[ t \frac{1}{\beta} ]= [ (t+1) (\frac{1}{\alpha}-1) ]-[ t (\frac{1}{\alpha}-1) ]=[ (t+1) \frac{1}{\alpha} ]-[ t \frac{1}{\alpha} ]-1 = k$, 
it would follow from this claim that $[ n \beta ]$ repeats $k$ times. 
So we just need to prove the double implication in the claim. For that purpose, from \eqref{thbrass1},  
we write $f(n):=f_{\alpha}(n)=[ (n+1)\alpha ]-[ n \alpha ]$. And from \eqref{thbrass4}, 
it follows that $f(n)=1$ if and only if there exists a $t$ such that $n=[ t \frac{1}{\alpha} ]$. Thus,
 from this last sentence we find that $[ \alpha (n+m) ]=r$ for the first $k+1$ values of $m$ exactly, if and only if 
the following three things happen: First, $f(n-1)=0$; second, $f(n)=f(n+k+1)=1$ and third, $f(n+m)=0$ for all numbers in between $n$ and $n+k+1$, i.e., for all $m$ such that $1\leq m< k+1$. 
For such an $n$, it follows from \eqref{thbrass4}  that $f(n+m)=0$ for $1\leq m< k+1$ if and only if there is no $j$ such 
that $n+m=[ j \frac{1}{\alpha} ]$, for $1 \leq m\leq k$. Thus we see that 
$[ (t+1) \frac{1}{\alpha} ]-[ t \frac{1}{\alpha} ] > k$. Since the difference between two consecutive 
numbers in the Beatty sequence $[ t \frac{1}{\alpha} ]$ is either $k$ or $k+1$, we see that
 $[ (t+1) \frac{1}{\alpha} ]-[ t \frac{1}{\alpha} ]=k+1$. So, we have proved that each time the
 sequence $[ n \alpha ]$ repeats $k+1$ times, then the difference 
$[ (t+1) \frac{1}{\alpha} ]-[ t \frac{1}{\alpha} ] =k+1$, and thus, the Theorem holds.
\end{proof}

Combining Lemma \ref{meslem9} and Lemma \ref{mesprop2}, we see that if a Beatty sequence is generated by an $\alpha$ such that each nonnegative integer repeats
$k$ or $k+1$ times, then in the Beatty sequence generated by $\frac{\alpha}{1-\alpha}=-1+\frac{1}{1-\alpha}$ integers repeat $k-1$ or $k$ times.
 If we iterate this process we have the following Theorem. 
\begin{lemma}
\label{mesprop3}
 For each number $\alpha$ with $\frac{1}{k+1}<\alpha < \frac{1}{k}$ for some integer $k$, there exists a number $\beta$ with 
$1<\beta<2$ given by 
$\beta -1+\cfrac{1}{2-\cfrac{1}{2-\cfrac{1}{2-\cfrac{...}{...-\cfrac{1}{2-\cfrac{1}{1-\alpha}}}}}}$, and $k-2$ two's 
in this expansion, such that $[ n \alpha ]=k$ for $k+1$ different numbers if and only if $k$ is in the Beaty sequence
 given by $[ n \beta ]$.
\end{lemma}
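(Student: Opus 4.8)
The plan is to prove the statement by iterating Lemma~\ref{mesprop2}, exactly as the remark preceding it suggests. Fix an irrational $\alpha$ with $\frac{1}{k+1}<\alpha<\frac1k$ and define $\theta_0=\alpha$ and $\theta_{j+1}=\frac{\theta_j}{1-\theta_j}$; equivalently $\frac{1}{\theta_{j+1}}=\frac{1}{\theta_j}-1$, so $\frac{1}{\theta_j}=\frac1\alpha-j$, which (since $\frac1\alpha\in(k,k+1)$) forces $\frac{1}{k-j+1}<\theta_j<\frac{1}{k-j}$ for $0\le j\le k-1$. Hence every $\theta_j$ is irrational, the terminal slope $\theta_{k-1}$ lies in $(\tfrac12,1)$, and by Lemma~\ref{meslem9} each nonnegative integer is repeated $k-j$ or $k-j+1$ times in the Beatty sequence $[n\theta_j]$.

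First I would run the chain. For $j=0,1,\dots,k-2$, Lemma~\ref{mesprop2} applied to $\theta_j$ (with its parameter equal to $k-j$) says that a value $t$ is repeated $k-j+1$ times in $[n\theta_j]$ if and only if it is repeated $k-j$ times in $[n\theta_{j+1}]$, and the sortjoin part of that lemma forces the ``$(k-j+1)$-fold'' values of $[n\theta_j]$ and the ``$(k-j)$-fold'' values of $[n\theta_{j+1}]$ to be the same increasing sequence; chaining these, all the sets involved coincide with a single sequence $D$. Composing the equivalences, $t$ is repeated $k+1$ times in $[n\alpha]$ if and only if $t$ is repeated twice in $[n\theta_{k-1}]$, i.e. if and only if $t\in D$, so $\beta:=\theta_{k-1}$ is the slope in question. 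To recognize it in the displayed form I would use the elementary identity $2-\frac{1-(j-1)\alpha}{1-j\alpha}=\frac{1-(j+1)\alpha}{1-j\alpha}$: evaluating the nested fraction with $k-2$ twos from the inside gives $\frac{1-(k-2)\alpha}{1-(k-1)\alpha}$, and then subtracting $1$ yields $\frac{\alpha}{1-(k-1)\alpha}=\theta_{k-1}=\beta$. If one prefers the phrasing ``$t$ lies in a Beatty sequence'' over ``$t$ is twice-repeated in $[n\beta]$'', one adds a final step: for $\beta\in(\tfrac12,1)$ the twice-repeated values of $[n\beta]$ are precisely the Beatty sequence of slope $\frac{\beta}{1-\beta}=\frac{\alpha}{1-k\alpha}$, i.e. the nested fraction with one more two.

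The argument is essentially bookkeeping layered on Lemma~\ref{mesprop2}, so I do not expect a real obstacle; the two points that need care are (i) checking at each level that the interval and irrationality hypotheses of Lemmas~\ref{meslem9} and~\ref{mesprop2} are satisfied, which is immediate from $\frac{1}{\theta_j}=\frac1\alpha-j$, and (ii) the algebraic verification that the substitution $\alpha\mapsto\frac{\alpha}{1-\alpha}$ amounts to inserting one extra ``$2-$'' in the nested fraction, which is what drives the chain (or, phrased as induction on $k$, with base case $k=2$ being Lemma~\ref{mesprop2} itself). As a quick independent check one can bypass the iteration altogether: writing $\frac1\alpha=k+\delta$ with $\delta\in(0,1)$, counting integers in $\big[\frac t\alpha,\frac{t+1}\alpha\big)$ gives $\#\{n:[n\alpha]=t\}=\left[\tfrac{t+1}{\alpha}\right]-\left[\tfrac{t}{\alpha}\right]=k+f_\delta(t)$ in the notation of Definition~\ref{thbrassdef1}, so $t$ occurs $k+1$ times exactly when $f_\delta(t)=1$, and by \eqref{thbrass4} this happens exactly when $t$ belongs to the Beatty sequence of slope $\frac1\delta=\frac{\alpha}{1-k\alpha}$, which is again the sequence $D$.
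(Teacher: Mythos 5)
Your proposal follows essentially the same route as the paper's proof: iterate Lemma~\ref{mesprop2}, i.e.\ the map $\alpha\mapsto\frac{\alpha}{1-\alpha}$ (equivalently $\frac1\alpha\mapsto\frac1\alpha-1$), until the repetition count drops, and read off the resulting slope as a nested fraction. Your explicit bookkeeping $\frac{1}{\theta_j}=\frac1\alpha-j$ is tighter than the paper's and correctly exposes an off-by-one in the statement as printed: the displayed fraction with $k-2$ twos equals $\theta_{k-1}\in(\tfrac12,1)$, whose Beatty sequence contains every nonnegative integer (so ``$t$ lies in $[n\beta]$'' is not the right criterion there), and the phrasing with $\beta>1$ requires $k-1$ twos, i.e.\ your $\theta_k=\frac{\alpha}{1-k\alpha}$ --- exactly the extra step you supply, which your closing direct computation via $f_\delta$ and \eqref{thbrass4} independently confirms.
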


\begin{proof}
In Lemma \ref{mesprop2}, an irrational $\alpha$ giving a maximum of $k+1$ repetitions, implies that the number 
\begin{equation}
\label{mes4b}
 \alpha_1:-1+\frac{1}{1-\alpha}
\end{equation}
 produces a maximum of $k$ repetitions. We iterate this process a second time and find that
\begin{equation}
\label{mes4c} 
\alpha_2:=-1+\frac{1}{1-\alpha_1}=-1+\cfrac{1}{1-\left(-1+\cfrac{1}{1-\alpha_1}\right)}=-1+\cfrac{1}{2-\cfrac{1}{1-\alpha}}
\end{equation}
produces a maximum of $k-1$ repetitions. If we iterate this process a total of $k$ times, we will get a 
sequence with no repetition, i.e., a Beatty sequence $[ n \beta ]$, for some $\beta>1$. 
Note that in each iteration after the second one, a new digit $2$ will continue to appear
as in the right hand side of \eqref{mes4c}. Thus the slope $\beta$ of the Beatty sequence is necessarily of the form
 $$\displaystyle -1+ \cfrac{1}{2-\cfrac{1}{2-\cfrac{1}{2-\cfrac{...}{-\cfrac{1}{2-\cfrac{1}{1-\alpha}\;\text{.}}}}}}$$
\noindent The number of times the digit $2$ appears in this expression is $k-2$, for the following reasons.
We start the process with the number $\alpha$ giving $k$ and $k+1$ repetition. Next step gives $\alpha_1$ which does not have
the digit $2$ in it by \eqref{mes4b}. The third number in this process, $\alpha_2$, is the first one having the digit $2$, and thus
this digit will appear $k-2$ times in $\beta$.
\end{proof}

\begin{corollary}
 Consider a Beatty sequence with slope $\alpha<1$, and let $k$ be the integer such that $\frac{1}{k+1}<\alpha < \frac{1}{k}$. Then the sequence of numbers that repeat 
$k$ times and the sequence of numbers that repeat $k+1$ times form a partition of the integers given by two complemetary Beatty sequences.
\end{corollary}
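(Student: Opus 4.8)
The plan is to make the two claimed sequences explicit and then read them off from the lemmas already proved. Write $C=\{[n\alpha]\}_{n=1}^{\infty}$, where $\frac{1}{k+1}<\alpha<\frac{1}{k}$. By Lemma~\ref{meslem9}, every positive integer occurs in $C$ exactly $k$ or $k+1$ times, and both possibilities occur infinitely often. Let $D$ be the set of positive integers that occur exactly $k+1$ times in $C$, and let $E$ be the set of those that occur exactly $k$ times. By the trichotomy just quoted, $D\cap E=\emptyset$ and $D\cup E=\mathbb{N}$, so $\{D,E\}$ is a partition of the positive integers by construction; the whole content of the corollary is that each piece is a Beatty sequence and that the two are complementary.

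For $D$ I would invoke Lemma~\ref{mesprop3} directly. That lemma produces an explicit $\beta\in(1,2)$, a finite continued-fraction-type expression in $\alpha$, with the property that a positive integer $m$ occurs $k+1$ times in $C$ \emph{if and only if} $m$ belongs to the Beatty sequence with slope $\beta$. In the "iff" form this is exactly the identity $D=\{[j\beta]\}_{j=1}^{\infty}$. I would then observe that $\beta$ is irrational: by the proof of Lemma~\ref{mesprop3}, $\beta$ is obtained from $\alpha$ by finitely many iterations of the M\"obius map $x\mapsto -1+\frac{1}{1-x}$, which has rational coefficients, so if $\beta$ were rational then $\alpha$ would be rational too, a contradiction.

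Finally, since $D$ is a Beatty sequence with irrational slope $\beta>1$, Beatty's Theorem (recalled in Section~\ref{intro}) identifies the complement of $D$ in $\mathbb{N}$ as the Beatty sequence with slope $\beta':=\beta/(\beta-1)$, for which $\frac{1}{\beta}+\frac{1}{\beta'}=1$. But the complement of $D$ in $\mathbb{N}$ is precisely $E$, because (again by Lemma~\ref{meslem9}) every positive integer occurs $k$ or $k+1$ times, so failing to occur $k+1$ times means occurring exactly $k$ times. Hence $E=\{[j\beta']\}_{j=1}^{\infty}$, and $D$ and $E$ are a complementary pair of Beatty sequences, which is the assertion.

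I do not expect a serious obstacle; the proof is essentially a repackaging of Lemmas~\ref{meslem9} and~\ref{mesprop3} together with Beatty's Theorem. The one place that wants care is the bookkeeping at the value $0$: Lemma~\ref{meslem9} is phrased for nonnegative integers and $0$ occurs $k$ times in $C$, whereas a homogeneous Beatty sequence with slope $>1$ never takes the value $0$. This is harmless once one agrees to read "the integers" in the statement as the positive integers (consistent with the paper's use of $\mathbb{Z}_{\{0\}}$ versus $\mathbb{N}$), but it should be said so that "partition of the integers" is unambiguous. The only other point to be vigilant about is using Lemma~\ref{mesprop3} in its full biconditional strength, so that $D$ is shown to equal — not merely to be contained in — the Beatty sequence of slope $\beta$.
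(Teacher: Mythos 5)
Your proposal is correct and follows essentially the same route as the paper: identify the $(k+1)$-fold repeaters as the Beatty sequence of slope $\beta$ via Lemma~\ref{mesprop3}, then obtain the $k$-fold repeaters as its Beatty complement of slope $\beta/(\beta-1)$, which is the paper's $\frac{1}{1-\frac{1}{\beta}}$. Your added checks (irrationality of $\beta$ and the bookkeeping at $0$) are sensible refinements of the same argument rather than a different approach.
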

\begin{proof}
 Since $0<\alpha<1$, each integer $n$ is in the Beatty sequence with slope $\alpha$, and it is repeated $k$ or $k+1$ times. 
By Lemma \ref{mesprop3} those numbers that repeat $k+1$ times form a Beatty sequence with slope $\beta$, 
hence those that only repeat $k$ times are the complementary Beatty sequence with slope $\displaystyle \frac{1}{1-\frac{1}{\beta}}\text{.}$
\end{proof}

We reinterpret the previous result in terms of sortjoin of sequences, to get
\begin{corollary}
\label{corsortjoin}
 Let $\alpha\in [0,1]$ be an irrational number , and let $k$ be the integer such that $\frac{1}{k+1}<\alpha < \frac{1}{k}$. Then the sequence $C=\left\{[ \alpha n]\right\}_{n=1}^{\infty}$ is given by the $k$ fold sortjoin of $\mathbb{Z}_0$ with a beatty sequence $D$. Specifically, $C=D\star\mathbb{Z}_0^{k}$.
\end{corollary}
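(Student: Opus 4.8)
The plan is to combine the two structural results just proven---Lemma \ref{meslem9} and Lemma \ref{mesprop2}---so that the final Corollary becomes essentially a translation into sortjoin notation. First I would invoke Lemma \ref{meslem9} with the given $k$: since $\frac1{k+1}<\alpha<\frac1k$, every nonnegative integer appears in $C=\{[\alpha n]\}_{n=1}^\infty$ either $k$ or $k+1$ times, and both multiplicities occur infinitely often. This already tells us that $C$ has the shape $E\star\mathbb{Z}_0^{k-1}$ for some increasing sequence $E$ of nonnegative integers, namely $E$ is the sequence of those integers that occur $k+1$ times (listed once each), with the ``base'' $\mathbb{Z}_0^{k-1}$ accounting for the $k-1$ copies every integer gets. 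What remains is to identify $E$ as a Beatty sequence.

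The key step is then to apply Lemma \ref{mesprop2}, which says precisely that an integer $t$ occurs $k+1$ times in $C=\{[\alpha n]\}$ if and only if $t$ occurs $k$ times in $B=\{[\beta n]\}$ with $\beta=\frac{\alpha}{1-\alpha}$; equivalently $C=D\star\mathbb{Z}_0^k$ and $B=D\star\mathbb{Z}_0^{k-1}$ for a common increasing sequence $D$. Reading off the statement of Lemma \ref{mesprop2}, the sequence $D$ there is exactly the set of integers occurring the \emph{maximal} number of times, so it coincides with the sequence $E$ from the paragraph above, and in particular $C=D\star\mathbb{Z}_0^k$. To finish, I would note that $D$ is a Beatty sequence: one checks $\frac1{1+1}<\frac{\alpha}{1-\alpha}=\beta$ has the right size, or more directly one iterates the reduction $\alpha\mapsto\frac{\alpha}{1-\alpha}$ exactly $k$ times as in Lemma \ref{mesprop3} until the repetition count drops to $1$, at which point $D$ is literally a Beatty sequence with the slope produced by the continued-fraction expression in Lemma \ref{mesprop3}. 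Either way, $D$ is Beatty, so $C=D\star\mathbb{Z}_0^k$ with $D$ Beatty, which is the assertion.

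I would present the argument as: (i) quote Lemma \ref{meslem9} to get the repetition structure and the decomposition $C=D\star\mathbb{Z}_0^{k-1}$ at the coarse level (this is the $B$ in that lemma's statement); (ii) quote Lemma \ref{mesprop2} to upgrade the extra copies into a single common $D$ so that $C=D\star\mathbb{Z}_0^{k}$; (iii) cite Lemma \ref{mesprop3} (or its Corollary) to conclude $D$ is a genuine Beatty sequence, not merely an increasing integer sequence. Care is needed only with the indexing convention: one must be consistent about whether $\mathbb{Z}_0$ starts at $0$ or $1$ and whether ``$k$-fold sortjoin'' means $k$ copies of $\mathbb{Z}_0$ plus one copy of $D$, matching the notation $A^k=A\star\cdots\star A$ fixed just before Definition \ref{sortjoin}; a one-line sanity check against the example $A\star\mathbb{Z}_0^2$ given there settles it.

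The main obstacle I anticipate is purely bookkeeping rather than conceptual: reconciling the off-by-one in the exponents between Lemma \ref{meslem9} (which gives $\mathbb{Z}_0^{k-1}$) and Lemma \ref{mesprop2} (which gives $\mathbb{Z}_0^{k}$), since the former counts an integer as appearing ``$k$ or $k+1$ times'' while the sortjoin $D\star\mathbb{Z}_0^{k}$ assigns $k$ copies from the base and one more to elements of $D$, for $k+1$ total. Making this consistent---and making sure the ``$k$'' in the Corollary's hypothesis $\frac1{k+1}<\alpha<\frac1k$ is the same $k$ threaded through both lemmas---is the only place a careless reader could go wrong, and it is exactly the point to state explicitly in the write-up.
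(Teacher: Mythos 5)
Your proposal is correct and follows essentially the same route as the paper, which offers no separate proof but derives the corollary as a reinterpretation of the immediately preceding corollary (itself resting on Lemmas \ref{meslem9}, \ref{mesprop2} and \ref{mesprop3}): the integers occurring $k+1$ times form a Beatty sequence $D$, and splitting off the base multiplicity $k$ gives $C=D\star\mathbb{Z}_0^{k}$. The only slip is in your intermediate description of step (i) --- if $E$ lists the $(k+1)$-fold integers once each, then $E\star\mathbb{Z}_0^{k-1}$ has multiplicities $k$ and $k-1$ rather than $k+1$ and $k$ --- but this is precisely the off-by-one you flag yourself, and it does not affect the final decomposition.
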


\begin{lemma}
\label{mesprop5}
Let  $A$ and $B$ be the complementary Beatty sequences generated by $\alpha$ and $\beta$ respectively,
 with $(1<\alpha<2)$, and let $a_n$ and $b_n$ be the $n^{th}$ elements of $A$ and $B$, respectively. 
Define $r_n$ as in Definition \eqref{mescountingsequence}. Then the difference, $r_{n+1}-r_n$ equals $0$ or $1$, 
and $r_{n+1}-r_n=1$ if and only if $a_{n+1}-a_n=1$, hence the sequence $r_n-r_{n-1}$ is Sturmian.
\end{lemma}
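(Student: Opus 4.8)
The plan is to recast $r_n$ in a form where the passage from $n$ to $n+1$ is transparent, and then to split on the size of the gap $a_{n+1}-a_n$, which for $1<\alpha<2$ is always $1$ or $2$. First I would note that, since $b_{n-1}=\max B_{n-1}$, Definition \ref{mescountingsequence} is equivalent to
\[
r_n=\#\{\, i\le n-1 \ :\ b_i>a_n \,\},
\]
the number of already--listed $B$--terms exceeding $a_n$. Two elementary facts will be used throughout: (i) $a_n<b_n$ for all $n$, because $\alpha<\beta$ gives $[ n\alpha ]\le[ n\beta ]$ while equality is impossible as $A\cap B=\emptyset$; and (ii) disjointness of $A$ and $B$, so no integer that is a term of $A$ is a term of $B$.

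In the case $a_{n+1}-a_n=1$, I would use that $a_n+1=a_{n+1}\in A$, so $a_n+1\notin B$, and hence ``$b_i>a_n$'' and ``$b_i\ge a_n+2$'' cut out the same set of indices. Thus $r_n=\#\{i\le n-1:b_i\ge a_n+2\}$ and $r_{n+1}=\#\{i\le n:b_i>a_{n+1}\}=\#\{i\le n:b_i\ge a_n+2\}$, which differ only in whether $i=n$ contributes, so $r_{n+1}-r_n=[\,b_n\ge a_n+2\,]$. By (i), $b_n\ge a_n+1$, and by (ii), $b_n\ne a_{n+1}=a_n+1$; hence $b_n\ge a_n+2$ and $r_{n+1}-r_n=1$.

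In the case $a_{n+1}-a_n=2$, the integer $a_n+1$ lies strictly between consecutive terms of $A$, so $a_n+1\notin A$, whence $a_n+1=b_j$ for some $j$; moreover $a_n+2=a_{n+1}\in A$ forces $b_{j+1}\ge a_n+3$. I would conclude $\{i:b_i>a_n\}=\{i\ge j\}$ and $\{i:b_i>a_{n+1}\}=\{i\ge j+1\}$, so that
\[
r_n=\#\{\, j\le i\le n-1\,\}\qquad\text{and}\qquad r_{n+1}=\#\{\, j+1\le i\le n\,\}.
\]
These two index ranges have the same length $\max(0,\,n-j)$, hence $r_{n+1}-r_n=0$.

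Combining the two cases gives $r_{n+1}-r_n\in\{0,1\}$ with $r_{n+1}-r_n=1$ iff $a_{n+1}-a_n=1$; equivalently $r_{n+1}-r_n=a_{n+1}-a_n-1=[ (n+1)(\alpha-1) ]-[ n(\alpha-1) ]$ with $0<\alpha-1<1$, which is exactly the characteristic sequence $f_{\alpha-1}$ of Definition \ref{thbrassdef1}, hence Sturmian (cf. \eqref{thbrass4} and \cite[Lemma 9.1.3]{allouche2003automatic}). The main obstacle I anticipate is the bookkeeping in the gap-$2$ case: a priori one must worry whether the index $j$ with $b_j=a_n+1$ lies inside the window $\{1,\dots,n-1\}$. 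Rewriting $r_n$ as a count of indices and observing that $n\mapsto n+1$ shifts the lower cut-off $j\mapsto j+1$ and the upper cut-off $(n-1)\mapsto n$ by the same amount makes the two cardinalities equal with no subcase on the position of $j$; identifying this invariance is the crux.
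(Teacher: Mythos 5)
Your proof is correct and follows essentially the same route as the paper's: both track how the set of $B$-terms counted by $r_n$ changes from $n$ to $n+1$, observing that the count always gains the new element $b_n$ at the top and loses one element at the bottom exactly when an element of $B$ sits in the gap between $a_n$ and $a_{n+1}$, i.e.\ exactly when $a_{n+1}-a_n=2$; both then deduce Sturmian-ness from $r_{n+1}-r_n=a_{n+1}-a_n-1$. Your index-based bookkeeping (and your use of $a_{n+1}\in A$ to rule out $b_{j+1}=a_n+2$, rather than the paper's appeal to $\beta>2$) is a slightly more explicit rendering of the same argument.
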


\begin{proof}
To prove the first part of the theorem, notice that by Definition \ref{mescountingsequence}, we are given that, for an integer $n$,  $R_n$ has $r_n$ elements, all of which belong
to $B$. Let us list all the elements of $R_n$, in descending order: 

\begin{equation}
  \label{mes10}
R_n=\{b_{n-1}, b_{n-2},...,b_{n-r_n} \}\text{.}
\end{equation}
\noindent Similarly,
\begin{equation}
\label{mes12}
 R_{n+1}=\{b_{n}, b_{n-1},...,b_{n-r_{n+1}} \}\text{.}
\end{equation}

We can see from these two sets that $R_{n+1}$ always contains an element that is not in $R_n$, namely $b_n$.
 Also note that the smaller elements of $R_n$ and $R_{n+1}$, namely $b_{n-r_n}$ and $b_{n-r_{n+1}}$ respectively, 
could be the same element. Indeed, one can see that if $a_n$ and $a_{n+1}$ are consecutive elements, then 
from Definition \ref{mescountingsequence} we see that $b_{n-r_n}=b_{n-r_{n+1}}$. On the other hand, if $a_n$ and $a_{n+1}$ are not 
consecutive, then since $A$ and $B$ are complementary Beatty sequences, there have to be some element of $B$ between $a_n$ and 
$a_{n+1}$. In fact there can only be one element, since $\frac{1}{\alpha}+\frac{1}{\beta}=1$ and $1<\alpha<\beta$ implies $\beta>2$ and thus $b_{n+1}-b_n\ge2$. Hence, we see that $r_{n+1}-r_n=1$ if and only if $a_{n+1}-a_n=1$, and if the difference $a_{n+1}-a_n\neq 1$, then $r_{n+1}-r_n=0$. Finally, the above argument together with the fact that $a_{n+1}-a_n=1$ or $2$, implies that $r_{n+1}-r_n=a_{n+1}-a_n-1$. Since the sequence given by the right hand side of this equality is Sturmian, it follows that the sequence given by the left hand side is also Sturmian. \\
\end{proof}

In a similar manner we get the following lemma.

\begin{lemma}
\label{CisBeatty}
Let $A$ and $B$ be complementary Beatty sequences and suppose that the slope $\beta>2$ and let $q=[ \beta ]$. Suppose $a_n$ and $b_n$ are the $nth$ term of $A$ and $B$ respectively, and let $c_n=\{ a\in A\,:\, a_n<a<b_n \}$. Then, the difference $c_{n+1}-c_n$ equals $q-2$ or $q-1$, and $c_{n+1}-c_n=q-2$ if and only if $b_{n+1}-b_n=q$.
\end{lemma}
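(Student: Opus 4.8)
The plan is to sidestep the interval-chasing used for Lemma~\ref{mesprop5} and instead derive a closed formula for $c_n$ directly from the complementarity of $A$ and $B$, after which the statement drops out in one line. Concretely, the first step is to show that $c_n = b_n - 2n$ for every $n\ge 1$. Here $c_n$ denotes the number of integers in the open interval $(a_n,b_n)$ lying in $A$ (the shorthand in the lemma's statement); since $a_n\in A$, $b_n\in B$ and $A\cap B=\emptyset$, this count is unchanged if we use the half-open interval $(a_n,b_n]$ instead. Because $A$ and $B$ partition $\mathbb{N}$, for every $m$ we have $\#(A\cap[1,m])+\#(B\cap[1,m])=m$; taking $m=b_n$ and using that $b_n$ is the $n$th element of $B$ gives $\#(A\cap[1,b_n])=b_n-n$, while $\#(A\cap[1,a_n])=n$ because $a_n$ is the $n$th element of $A$. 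Subtracting, $c_n=\#\bigl(A\cap(a_n,b_n]\bigr)=(b_n-n)-n=b_n-2n$. (As a sanity check against the worked golden-ratio example, $b_1,b_2,b_3=2,5,7$ give $c_n=0,1,1$, matching the listed $C$.) Note also that $1<\alpha<2$ forces $a_n=[\alpha n]\le 2n-1<b_n$, so the interval is being used consistently and $c_n\ge 0$.

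The second step is to differentiate. From $c_n=b_n-2n$ we get $c_{n+1}-c_n=(b_{n+1}-b_n)-2$. Since $B$ is the Beatty sequence of the irrational number $\beta$ and $q=[\beta]$, the standard gap identity $[\beta(n+1)]-[\beta n]\in\{[\beta],[\beta]+1\}$ yields $b_{n+1}-b_n\in\{q,q+1\}$, whence $c_{n+1}-c_n\in\{q-2,q-1\}$; and because these two values are distinct, $c_{n+1}-c_n=q-2$ if and only if $b_{n+1}-b_n=q$. That is exactly the assertion of the lemma.

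I do not anticipate a genuine obstacle: essentially all of the content sits in the first step, and the only subtlety there is the passage between the open and half-open interval, which is legitimate precisely because $b_n\notin A$. An alternative route, closer in spirit to the proof of Lemma~\ref{mesprop5}, would be to compare directly the set of $A$-elements of $(a_n,b_n)$ with that of $(a_{n+1},b_{n+1})$ — passing from one to the next one drops $a_{n+1}$ from the bottom and adjoins every integer of $(b_n,b_{n+1})$ at the top, all of which lie in $A$ since $b_n$ and $b_{n+1}$ are consecutive terms of $B$ — but that argument has to handle the small-$n$ cases where $a_{n+1}\ge b_n$ (e.g.\ $n=1$ for the golden ratio), so I would present the counting argument above instead.
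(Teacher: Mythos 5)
Your proof is correct, and it takes a genuinely different route from the paper's. The paper offers no written argument for Lemma~\ref{CisBeatty} beyond the remark that it follows ``in a similar manner'' to Lemma~\ref{mesprop5}, i.e.\ by directly comparing the set of $A$-elements of $(a_n,b_n)$ with that of $(a_{n+1},b_{n+1})$ and tracking which elements enter and leave; that is exactly the alternative you describe and rightly flag as needing care when the intervals fail to overlap (e.g.\ $a_2=3>b_1=2$ for the golden ratio). Your counting argument replaces this with the closed formula $c_n=b_n-2n$, obtained purely from complementarity: $\#(A\cap[1,b_n])=b_n-n$ because $b_n$ is the $n$th term of $B$, and $\#(A\cap[1,a_n])=n$, so $c_n=\#\bigl(A\cap(a_n,b_n]\bigr)=b_n-2n$, the passage from the open to the half-open interval being justified by $b_n\notin A$. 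Differencing then gives $c_{n+1}-c_n=(b_{n+1}-b_n)-2$, and the standard gap fact $b_{n+1}-b_n\in\{q,q+1\}$ finishes it. What your route buys is the absence of any case analysis on whether consecutive intervals overlap, a reusable exact formula for $c_n$ (and hence $r_n=2n-a_n-1$ via part (1) of the main theorem), and a proof that actually appears on the page rather than by analogy; what the paper's route buys is uniformity of method with Lemma~\ref{mesprop5} and a more visibly combinatorial picture of which elements of $A$ are gained or lost at each step. Both establish the same equivalence, since $q-2\neq q-1$ makes the ``if and only if'' automatic once the two possible values are identified.
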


The following is a corollary to Lemma \ref{mesprop5}.
\begin{corollary}
 \label{mescor2}
If $A$ and $B$ are complementary Beatty sequences, then the sequence $R=\{r_n\}_{n=1}^\infty$ is Beatty with slope $r$ such that $0<r<1$.
\end{corollary}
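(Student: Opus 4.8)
The plan is to deduce from Lemma \ref{mesprop5} that the first-difference sequence $\{r_{n+1}-r_n\}$ is a $\{0,1\}$-sequence which is, in fact, Sturmian, and then to invoke the standard fact that a sequence whose consecutive differences form a Sturmian word with irrational density is itself (up to an additive constant and indexing shift) a Beatty sequence. First I would recall that Lemma \ref{mesprop5} gives $r_{n+1}-r_n = a_{n+1}-a_n-1$, and that $\{a_{n+1}-a_n\}$ is the Sturmian word associated to the slope $\alpha$ with $1<\alpha<2$; hence $\{r_{n+1}-r_n\}$ is the Sturmian word of slope $\alpha-1 \in (0,1)$, which takes value $1$ with asymptotic density $\alpha-1$ times the appropriate normalization. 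More precisely, since $a_n = [\,n\alpha\,]$ we have $r_n = \sum_{k=1}^{n-1}(a_{k+1}-a_k-1) + r_1 = (a_n - a_1) - (n-1) + r_1 = [\,n\alpha\,] - n + (\text{constant})$, so $r_n/n \to \alpha - 1$, and one checks $0 < \alpha-1 < 1$ because $1<\alpha<2$.

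The remaining point is to pass from this closed form $r_n = [\,n\alpha\,] - n + c$ for a constant $c$ to the assertion that $R$ is genuinely a Beatty sequence $\{[\,rn\,]\}$ with $0<r<1$. Here I would use the identity $[\,n\alpha\,] - n = [\,n(\alpha-1)\,]$, valid for all integers $n$, which immediately rewrites $r_n = [\,n(\alpha-1)\,] + c$. To eliminate the additive constant $c$, I would pin down $r_1$ directly from Definition \ref{mescountingsequence}: the interval $(a_1,b_0]$ is empty since $b_0 = \emptyset$ (equivalently $a_1 = 1$ and there is no $b_0$), so $r_1 = 0$; substituting $n=1$ into $r_n = [\,n(\alpha-1)\,]+c$ and using $0<\alpha-1<1$ gives $[\,(\alpha-1)\,] = 0$, hence $c = 0$. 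Therefore $r_n = [\,n(\alpha-1)\,]$, which exhibits $R$ as the Beatty sequence with slope $r = \alpha - 1 \in (0,1)$, as claimed.

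The main obstacle I anticipate is bookkeeping of the additive constant and the indexing convention: Lemma \ref{mesprop5} is stated for the MES-generated $r_n$ with the specific initialization $B_0 = \emptyset$, so I must be careful that the telescoping starts correctly and that $r_1$ is computed with that convention rather than with some shifted one; a second subtlety is justifying the floor identity $[\,n\alpha\,]-n = [\,n(\alpha-1)\,]$ for all $n\ge 1$, which is elementary but should be stated explicitly. Once these are handled, the argument is short. (An alternative, purely qualitative route — observing that a $\{0,1\}$-valued Sturmian difference sequence with irrational density forces the partial sums to be a Beatty sequence, by the equivalence between Sturmian and Beatty sequences cited after Corollary \ref{corollary1} — would also work, but the explicit computation above is cleaner and yields the slope $r = \alpha-1$ for free.)
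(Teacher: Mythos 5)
Your overall strategy---telescoping the first differences supplied by Lemma \ref{mesprop5} into a closed form for $r_n$---is more explicit than the paper's own argument, which merely notes that $r_{n+1}-r_n$ takes two values and is Sturmian and then invokes the correspondence between Sturmian difference sequences and Beatty sequences. However, your starting identity is wrong. Lemma \ref{mesprop5} states that $r_{n+1}-r_n=1$ \emph{if and only if} $a_{n+1}-a_n=1$; since $a_{n+1}-a_n\in\{1,2\}$, this means $r_{n+1}-r_n=2-(a_{n+1}-a_n)$, the \emph{complement} of the word you used, not $r_{n+1}-r_n=a_{n+1}-a_n-1$. A check with the golden ratio ($A=\{1,3,4,6,8,9,\dots\}$, $B=\{2,5,7,10,13,15,\dots\}$, $R=\{0,0,1,1,1,2,2,3,\dots\}$) confirms this: $r_2-r_1=0$ while $a_2-a_1-1=1$. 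Consequently your closed form $r_n=[\,n(\alpha-1)\,]$ is false (for $\alpha=\phi$ it predicts $r_2=[\,2(\phi-1)\,]=1$, but $r_2=0$), and the slope you extract, $\alpha-1$, contradicts Theorem \ref{mesprop6} and part (2) of the main theorem, which give $\rho=2-\alpha$; the two agree only when $\alpha=3/2$. (The closing line of the paper's own proof of Lemma \ref{mesprop5} contains the same formula $r_{n+1}-r_n=a_{n+1}-a_n-1$, but it is inconsistent with the lemma's stated ``if and only if'' and with the data.)

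The repair is mechanical and preserves your architecture. Telescoping the correct differences gives $r_n-r_1=2(n-1)-(a_n-a_1)$; with $a_1=[\alpha]=1$ and $r_1=0$ (the interval $(a_1,b_0]$ meets $B_0=\emptyset$) this yields $r_n=2n-1-[\,n\alpha\,]=[\,n(2-\alpha)\,]$, using $[-x]=-1-[x]$ for $x$ non-integral. This exhibits $R$ as the Beatty sequence of slope $2-\alpha\in(0,1)$, proving the corollary and in fact also Theorem \ref{mesprop6} in one stroke---so the corrected version of your argument is sharper than the paper's qualitative proof. Your remaining steps (the floor identity for integer shifts, the pinning down of the additive constant via $r_1$) are sound and worth keeping.
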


\begin{proof}
 The difference between two elements of $R$ is either $0$ or $1$. Thus if $R$ is Beatty, the slope should be a 
number $r$ with $0<r<1$. Since $A$ is a Beatty sequence, Lemma \ref{mesprop5} together 
with the fact that $r_{n+1}-r_n$ can take only two possible values, implies that $R$ is also Beatty. Hence the corollary holds. 
\end{proof}

We now get the following proposition that is directly related to part 2 of the main theorem. Indeed, Theorem \ref{mesprop6} and \ref{slopeofC} complete the proof of the second part of the main theorem.

\begin{theorem}
\label{mesprop6}
If $A$ and $B$ are are complementary Beatty sequences, then the sequence $R=\{r_n\}_{n=1}^\infty$ is given by the slope $r:=2-\alpha$, where $\alpha$ is the slope of the sequence $A$. 
\end{theorem}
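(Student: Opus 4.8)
The plan is to pin down the value of the slope $r$ of the Beatty sequence $R$, which Corollary \ref{mescor2} already guarantees lies in $(0,1)$, by a counting (density) argument. The key observation is that the defining quantity $r_n$ from Definition \ref{mescountingsequence} counts elements of $B_{n-1}$ lying in the interval $(a_n, b_{n-1}]$, and by Lemma \ref{mesprop5} we have the clean increment relation $r_{n+1} - r_n = a_{n+1} - a_n - 1$. Summing this telescoping identity from $n = 1$ to $N-1$ gives $r_N - r_1 = (a_N - a_1) - (N-1)$, so $r_N = a_N - N + (\text{constant})$. Dividing by $N$ and letting $N \to \infty$, I would use the standard fact that $a_N / N \to \alpha$ for the Beatty sequence with slope $\alpha$, hence $r_N / N \to \alpha - 1$. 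But wait — this would give slope $\alpha - 1$, not $2 - \alpha$; so the indexing of $R$ relative to $A$ must be accounted for carefully. The point is that $r_N$ is \emph{not} the $N$-th term of the Beatty sequence $R$ in increasing order of \emph{values}; rather, $R = \{r_n\}$ is a nondecreasing sequence taking each value several times, and we must instead ask: for a given value $t$, how many indices $n$ satisfy $r_n = t$? Equivalently, I would compute $\lim_{N\to\infty} r_N/N$ correctly and then invert.

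Let me restate the plan more carefully. By Lemma \ref{mesprop5}, the first-difference sequence $r_{n+1} - r_n$ equals $a_{n+1} - a_n - 1 \in \{0, 1\}$, and it is $1$ exactly when $a_{n+1} - a_n = 1$. Since $A$ has slope $\alpha$ with $1 < \alpha < 2$, the density of indices $n$ with $a_{n+1} - a_n = 1$ is, by the standard Beatty/Sturmian frequency computation, equal to $2 - \alpha$ (the gaps of a Beatty sequence with slope $\alpha \in (1,2)$ are $1$'s and $2$'s, with the $1$'s occurring with frequency $2 - \alpha$ and the $2$'s with frequency $\alpha - 1$, since the average gap is $\alpha$). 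Therefore
\begin{equation}
\label{eq:rlimit}
\lim_{N\to\infty} \frac{r_N}{N} = \lim_{N\to\infty} \frac{1}{N}\sum_{n=1}^{N-1}(r_{n+1}-r_n) = 2 - \alpha.
\end{equation}
Since $R = \{r_n\}_{n=1}^\infty$ is a nondecreasing sequence of nonnegative integers with $r_N/N \to 2-\alpha \in (0,1)$, and since by Corollary \ref{mescor2} it is already known to be a Beatty sequence, its slope must be exactly $2 - \alpha$: a Beatty sequence $\{[ n\sigma ]\}$ with $0 < \sigma < 1$ satisfies $[ N\sigma ]/N \to \sigma$, so the limit \eqref{eq:rlimit} identifies $\sigma = 2 - \alpha$.

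The steps in order: (i) invoke Lemma \ref{mesprop5} for the increment identity $r_{n+1} - r_n = a_{n+1} - a_n - 1$; (ii) sum/telescope to get $r_N = a_N - N + c$ for a constant $c$; (iii) use $a_N/N \to \alpha$ (standard for Beatty sequences) to conclude $r_N/N \to \alpha - 1$ — and here I must reconcile this with the claimed slope $2 - \alpha$ by noting whether the paper's convention makes $R$ indexed so that $r_n = [ n \rho ]$ directly, or whether an inversion between a sequence and its "counting complement" is involved; (iv) appeal to Corollary \ref{mescor2} that $R$ is Beatty, so the ratio limit pins the slope. The main obstacle I anticipate is precisely step (iii): getting the arithmetic of the limit to land on $2 - \alpha$ rather than $\alpha - 1$, which hinges on correctly interpreting whether $r_n$ is the $n$-th \emph{value} of $R$ or a count associated to the $n$-th step of the construction. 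If it is the latter, then the relevant statement is that the \emph{frequency} with which a new value is attained (i.e.\ the density of the $1$'s in the difference sequence) is $2-\alpha$, which by Corollary \ref{mescor2} and the structure of Beatty sequences is exactly the slope of $R$; this is the reading consistent with the theorem statement, and the proof above via \eqref{eq:rlimit} is written accordingly.
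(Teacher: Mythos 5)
Your proof is correct, but it takes a genuinely different route from the paper's, so a comparison is in order. The paper proves Theorem \ref{mesprop6} by an exact, termwise matching of difference sequences: using $[-x]=-1-[x]$ for non-integral $x$, it computes $[(2-\alpha)(n+1)]-[(2-\alpha)n]=2-(a_{n+1}-a_n)$, which equals $1$ precisely when $a_{n+1}-a_n=1$; combined with Lemma \ref{mesprop5} this shows that the candidate $[(2-\alpha)n]$ and $r_n$ have identical increments at every $n$ (and both equal $0$ at $n=1$), so $r_n=[(2-\alpha)n]$ holds as an exact identity, with Corollary \ref{mescor2} playing only a framing role. You instead run an asymptotic density argument: $r_N$ is the partial sum of the increments, hence counts the indices $n<N$ with $a_{n+1}-a_n=1$, whose frequency is $2-\alpha$ because the gaps of $A$ are $1$'s and $2$'s with mean $\alpha$; therefore $r_N/N\to 2-\alpha$, and this pins down the slope only because Corollary \ref{mescor2} already guarantees that $R$ is a homogeneous Beatty sequence. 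So your proof leans on that corollary essentially rather than decoratively, which is the one structural cost of your approach; what it buys is transparency (no floor-function manipulation, and the constant $2-\alpha$ appears for a visible combinatorial reason, as the frequency of unit gaps in $A$). A further point in your favor: you correctly detected that the closing formula of Lemma \ref{mesprop5}, namely $r_{n+1}-r_n=a_{n+1}-a_n-1$, is inconsistent with that lemma's own ``if and only if'' statement and with Definition \ref{mescountingsequence}; the correct relation is $r_{n+1}-r_n=2-(a_{n+1}-a_n)$, and you resolved the discrepancy in the right direction --- telescoping the printed formula would have landed you on $\alpha-1$, which is wrong. Your hedging in step (iii) is settled exactly as you suspected: $r_n$ is the count attached to the $n$-th step of the construction, its increments are governed by the gaps of $A$, and the density of unit gaps is the slope.
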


\begin{proof}
Since $0<2-\alpha <1$, in light of Theorem \ref{mesprop5} and its corollary, it is enough to prove 
that $[ (2-\alpha)(n+1) ]-[ (2-\alpha)n ]=1$ if and only if $a_{n-1}-a_n=1$. 
For this purpose, note that 
$[ (2-\alpha)(n+1) ]-[ (2-\alpha)n ]=2(n+1)-2n+[ -\alpha(n+1) ]-[ -\alpha n ]=2+[ -\alpha(n+1) ]-[ -\alpha n ]$. 
It is clear that, for $x$ non integral, $[ -x]= -1-[ x]$, thus we see that 
$2+[ -\alpha(n+1) ]-[ -\alpha n ]= 2-(a_{n+1}-a_n)$. This is equal to $1$ if and only if $a_{n+1}-a_n=1$.

\end{proof}

In a similar manner we get the following theorem, using the definition of $C$ given in Lemma \ref{mesprop5}
\begin{theorem}
\label{slopeofC}
If $A$ and $B$ are complementary Beatty sequences, then, the sequence $C=\{c_n\}_{n=1}^\infty$ is given by the slope $c:=\frac{2-\alpha}{\alpha-1}$, where $\alpha$ is the slope of the sequence $A$. 
\end{theorem}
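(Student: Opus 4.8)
The plan is to run the exact analogue of the proof of Theorem~\ref{mesprop6}, with Lemma~\ref{CisBeatty} playing the role that Lemma~\ref{mesprop5} plays there. Throughout we may assume $1<\alpha<2$, hence $\beta>2$; this is the standing convention of the preceding lemmas and is in any case forced, since if $\alpha>2$ then $b_n<a_n$ and the quantity $c_n$ of Lemma~\ref{CisBeatty} is not even defined, while the proposed slope $c=\frac{2-\alpha}{\alpha-1}$ would be negative. Set $\gamma:=\frac{2-\alpha}{\alpha-1}$; by the chain of equalities in \eqref{mes5} we have $\gamma=\beta-2$, so $\gamma$ is a positive irrational. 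Since the Beatty sequence $\{[\gamma n]\}_{n\ge1}$ is recovered from its first term and its successive increments by telescoping, it suffices to prove the two statements: (i) $c_1=[\gamma]$, and (ii) $c_{n+1}-c_n=[\gamma(n+1)]-[\gamma n]$ for all $n\ge1$. Together these force $c_n=[\gamma n]$ for every $n$, which is precisely the assertion.

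For (ii), put $q:=[\beta]$. On one side, Lemma~\ref{CisBeatty} gives $c_{n+1}-c_n\in\{q-2,\,q-1\}$, with $c_{n+1}-c_n=q-2$ if and only if $b_{n+1}-b_n=q$. On the other side, since $2n$ and $2(n+1)$ are integers and $\gamma=\beta-2$, we get $[\gamma(n+1)]-[\gamma n]=[\beta(n+1)]-[\beta n]-2=(b_{n+1}-b_n)-2$; and as $\beta$ is irrational with integer part $q>2$, the increments $b_{n+1}-b_n$ take only the values $q$ and $q+1$. Hence $[\gamma(n+1)]-[\gamma n]\in\{q-2,q-1\}$, and it equals $q-2$ exactly when $b_{n+1}-b_n=q$ --- the same dichotomy obeyed by $c_{n+1}-c_n$. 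This proves (ii).

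For (i), note $a_1=[\alpha]=1$ and $b_1=[\beta]=q$ because $1<\alpha<2<\beta$. Since $B$ is increasing, $b_1=q$ is its smallest element, so none of $1,2,\dots,q-1$ lies in $B$; complementarity $A\cup B=\N$ then puts all of them in $A$. The integers strictly between $a_1=1$ and $b_1=q$ are exactly $2,3,\dots,q-1$, all in $A$, and there are $q-2$ of them, so $c_1=q-2=[\beta]-2=[\beta-2]=[\gamma]$, using the floor identity $[x-m]=[x]-m$ for $m\in\Z$. Combining (i) and (ii) by induction (telescoping the increments) yields $c_n=[\gamma n]$ for all $n\ge1$, i.e.\ $C$ is the Beatty sequence of slope $\gamma=\frac{2-\alpha}{\alpha-1}$, as claimed.

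The only nontrivial ingredient is Lemma~\ref{CisBeatty}; once it is in hand, the proof is a short telescoping computation together with the elementary identity $[x-m]=[x]-m$. So the real obstacle is already isolated in that lemma: establishing that the number $c_{n+1}-c_n$ of $A$-elements gained inside the expanding window $(a_n,b_n)$ drops to $q-2$ precisely when $b_{n+1}-b_n=q$. That step requires tracking, as $n$ advances, which elements of $A$ enter and leave the interval $(a_n,b_n)$, using $\beta>2$ (so consecutive $b$'s differ by at least $2$) and the complementarity of $A$ and $B$ to control both endpoints --- exactly parallel to the bookkeeping carried out for $R$ in Lemma~\ref{mesprop5}.
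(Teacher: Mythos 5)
Your proof is correct and follows essentially the route the paper intends: the paper's own ``proof'' of Theorem \ref{slopeofC} is just the remark that it follows in a similar manner to Theorem \ref{mesprop6} with Lemma \ref{CisBeatty} playing the role of Lemma \ref{mesprop5}, which is exactly the argument you carry out (identify $\frac{2-\alpha}{\alpha-1}=\beta-2$, match the increment dichotomies, telescope). Your explicit verification of the base case $c_1=[\gamma]$ is a worthwhile addition, since here $\gamma=\frac{2-\alpha}{\alpha-1}$ need not lie in $(0,1)$, so the first term is not automatically $0$ as it is for the sequence $R$.
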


We get the following theorem, that we find it to be a striking formula involving homogeneous Beatty sequences. It also has a very natural and straightforward interpretation that we highlight in the remark that follows the theorem. This interpretation is highlighted in part 2) of the main theorem.

\begin{theorem}
\label{mesprop6a}
Let $\alpha$ and $\beta$ be two irrational numbers generating two complementary Beatty sequences, 
$A$ and $B$, respectively, and let $1<\alpha<2$. 
Set $a_n=[ \alpha n ]$ and $b_n=[ \beta n]$. Then for any integer $n$ we have
\begin{equation}
\label{mes12a} 
  [ \beta^{-1}(b_{n-1}+1)]-[ \beta^{-1}(a_{n})]=[(2-\alpha)n]\text{.}
\end{equation}
Similarly, we have 
\begin{equation}
\label{mes12b} 
[ \alpha^{-1} b_n ] -[ \alpha^{-1}(a_n+1)]=\left[ \frac{2-\alpha}{\alpha-1}n\right]\text{.}
\end{equation}
\end{theorem}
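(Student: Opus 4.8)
The plan is to recognize each left-hand side as a count of lattice points of one Beatty sequence inside an interval, and then evaluate that count two ways. For the first identity \eqref{mes12a}, recall from Definition \ref{thbrassdef2} and equation \eqref{thbrass4} that $g_\beta'(m)=1$ exactly when $m$ lies in the Beatty sequence $B=\{b_k\}$, and that $g_\beta'(m)=f_{1/\beta}(m)=[\beta^{-1}(m+1)]-[\beta^{-1}m]$. Summing this telescoping identity from $m=a_n$ to $m=b_{n-1}$ gives
\begin{equation*}
[\beta^{-1}(b_{n-1}+1)]-[\beta^{-1}(a_n)]=\sum_{m=a_n}^{b_{n-1}} g_\beta'(m)=\#\{\,m\in B : a_n\le m\le b_{n-1}\,\}.
\end{equation*}
Since $a_n<b_n$ and $a_n\notin B$ (as $A,B$ are complementary), the condition $a_n\le m\le b_{n-1}$ is the same as $a_n<m\le b_{n-1}$, so this count is exactly $r_n$ from Definition \ref{mescountingsequence}. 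By Theorem \ref{mesprop6}, $r_n=[(2-\alpha)n]$, which is \eqref{mes12a}.

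For the second identity \eqref{mes12b}, I would run the same argument with $\alpha$ in place of $\beta$: by \eqref{thbrass4}, $f_{1/\alpha}(m)=[\alpha^{-1}(m+1)]-[\alpha^{-1}m]$ equals $1$ precisely when $m\in A$. Telescoping from $m=a_n+1$ to $m=b_n-1$ yields
\begin{equation*}
[\alpha^{-1}b_n]-[\alpha^{-1}(a_n+1)]=\sum_{m=a_n+1}^{b_n-1} f_{1/\alpha}(m)=\#\{\,m\in A : a_n<m<b_n\,\},
\end{equation*}
where I have used that $a_n\notin B$ and $b_n\notin A$ to convert the closed range of the telescoping sum into the open interval $(a_n,b_n)$. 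By the definition of $c_n$ (the version in Lemma \ref{CisBeatty}/Lemma \ref{mesprop5}) this count is $c_n$, and Theorem \ref{slopeofC} identifies it with $[\tfrac{2-\alpha}{\alpha-1}n]$, giving \eqref{mes12b}.

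The routine obstacles are the endpoint bookkeeping: making sure the telescoping limits match the half-open versus open intervals in Definitions \ref{mescountingsequence} and \ref{CisBeatty}, and checking that $a_n,b_n$ are never integer multiples making the floor identities for negative arguments misbehave — both are handled by complementarity ($a_n\notin B$, $b_n\notin A$) and irrationality of $\alpha,\beta$. The genuinely substantive input is external: everything reduces to Theorems \ref{mesprop6} and \ref{slopeofC}, so the real content of \eqref{mes12a}–\eqref{mes12b} is just the observation that the alternating floor expression on the left is the Sturmian counting function summed over the gap between $a_n$ and $b_n$. If one prefers not to invoke \ref{mesprop6} and \ref{slopeofC}, I would instead compute $[\beta^{-1}(b_{n-1}+1)]-[\beta^{-1}a_n]$ directly: write $b_{n-1}+1$ and $a_n$ in terms of $n$ using $a_n=[\alpha n]$, $b_n=[\beta n]$ and the Wythoff-type relation $b_n=a_n+$ (something), substitute $\beta^{-1}=1-\alpha^{-1}$, and simplify the floors — but this is exactly the kind of endpoint-sensitive computation the telescoping argument lets us avoid, so I expect the Sturmian-sum route to be cleanest.
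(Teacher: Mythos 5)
Your proposal is correct and follows essentially the same route as the paper: both reduce \eqref{mes12a} to showing $[\beta^{-1}(b_{n-1}+1)]-[\beta^{-1}a_n]=r_n$ by summing the Sturmian indicator $f_{1/\beta}$ over the gap (the paper subtracts two partial sums via \eqref{thbrass2}, you telescope directly, which is the same computation), and then invoke Theorem \ref{mesprop6} (resp.\ Theorem \ref{slopeofC} for \eqref{mes12b}). The only cosmetic difference is that you write out the second identity, which the paper leaves as ``a similar argument.''
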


\begin{proof}
We will prove identity \eqref{mes12a}, since \eqref{mes12b} follows by a similar argument using Theorem \ref{slopeofC}. To prove equation \eqref{mes12a} in light of Theorem \ref{mesprop6}, we just need to prove that 
$r_n= [ \frac{1}{\beta}(b_{n-1}+1)]-[\frac{1}{\beta}(a_{n})]$. 
For that purpose, recall that $r_n$ can be written as: 
\begin{equation}
 \label{mes13}
r_n= \# \{ b \in B: a_n < b \leq b_{n-1}\}= \sum_{\substack{b\in B \\ a_n < b \leq b_{n-1}}} 1 \text{.}
\end{equation}
Now define $f(k):=f_{\frac{1}{\beta}}(k)$ as in \eqref{thbrass1}. 
By \eqref{thbrass4}, $f(a_n)=0$, and thus by the same equation we find that \eqref{mes13} becomes:
\begin{equation}
\label{mes14}
 \sum_{a_n \leq k \leq b_{n-1}}f(k)=\sum_{1 \leq k \leq b_{n-1}}f(k)-\sum_{1 \leq k < a_n}f(k)=[ \beta^{-1}(b_{n-1}+1)]-[ \beta^{-1}(a_{n})]\text{.}
\end{equation}
This last equality follows from \eqref{thbrass2}.
\end{proof}

\begin{remark}
 \label{mesprop6b}
We will interpret equation \eqref{mes12a}, and a similar interpretation follows for equation \eqref{mes12b}. Note that in Corollary \ref{mesprop6a} the Sturmian sequence with slope $1/\beta$ is the
indicator function of the Beatty sequence with slope $\beta$. Hence, in light of Lemmas \ref{thbrass2} and \ref{thbrass4}, the left hand side of equation \ref{mes12a}, 
gives the number of integers strictly between $a_n$ and $b_n$ 
which are in $B$. As a passing comment, we mention that the right hand side of \eqref{mes12a} gives us another way to write this quantity. Since these $r_n$ integers are in $B$, if we want to generate a complementary pair by the MES we simply need to find a sequence $c_n'$ that consistently gives the number of integers between $a_n$ and $b_n$ that are in $A$. We would then take these numbers $c_n'$ as the skipping sequence in the algorithm. This is what we show in the following Theorem. As mentioned before, the last part of Lemma \ref{mesprop8} finishes the proof of the Partial Decomposition Theorem.\end{remark}

\begin{lemma}
 \label{mesprop8}
Let $\alpha$ be an irrational number such that $1<\alpha<2$, and let $k\ge1$ be the unique positive integer
 for which $\displaystyle \frac{2k-1}{k}< \alpha < \frac{2k+1}{k+1}$. The skipping sequence $C$ that generates
 the MES algorithm with output the Beatty sequence given by $\alpha$, can be generated by a Beatty sequence $D$ 
with slope $\delta$ given as follows:

\begin{enumerate}
 \item if $k>1$, (i.e., if $\displaystyle \frac{3}{2}< \alpha <2$), then the slope $\delta$ is given by:
$$\displaystyle \delta:=-1+\cfrac{1}{2-\cfrac{1}{2-\cfrac{1}{2-\cfrac{...}{...-\cfrac{1}{2-\cfrac{1}{\alpha-1}}}}}}$$ with 
$k-1$ two's in this expansion, and the frequency for the skipping sequence is given as follows: Each number $n$ will 
appear $k$ times in $C$ if $n \in D$, and $k-1$ times otherwise.
\item If $k=1$ (the case $\displaystyle 1<\alpha<\frac{3}{2}$), then $\delta=\frac{2-\alpha}{\alpha-1}$, 
and the frequency is given as follows: each number $n$ will appear once in the skipping sequence if $n\in D$ and will not 
appear in the skipping sequence if $n \notin D$.
\item Furthermore, $C$ is a Beatty sequence and is given by the slope $c=\frac{2-\alpha}{\alpha-1}$.
\end{enumerate}
\end{lemma}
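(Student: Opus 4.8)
The plan is to establish part (3) first and then read parts (1) and (2) off the internal structure of the resulting Beatty sequence $C$. For part (3): by part (3) of Theorem~\ref{partial1} (already proved), the skipping sequence generating a complementary pair $A,B$ via the MES is the combinatorial sequence $c_n=\#\{a\in A:\ a_n<a<b_n\}$ (it is unique, since at step $n$ the number of excluded integers below $b_n$ is forced). Applying this with $A$ the Beatty sequence of slope $\alpha$ and $B$ its complement (slope $\beta$, $\tfrac1\alpha+\tfrac1\beta=1$), Theorem~\ref{slopeofC} identifies $C$ as the Beatty sequence of slope $c=\frac{2-\alpha}{\alpha-1}$, and this slope is irrational since $c=p/q$ would give $\alpha=\frac{2q+p}{p+q}\in\mathbb{Q}$. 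This is part (3), and the finer structure of this Beatty sequence $C$ is what yields (1) and (2).

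Next I would rewrite the hypothesis on $k$: the condition $\frac{2k-1}{k}<\alpha<\frac{2k+1}{k+1}$ is exactly $\frac1{k+1}<2-\alpha<\frac1k$, i.e.\ $\frac1{2-\alpha}\in(k,k+1)$, so
$$\frac1c=\frac{\alpha-1}{2-\alpha}=\frac1{2-\alpha}-1\in(k-1,k),\qquad\text{hence}\qquad\frac1k<c<\frac1{k-1}.$$
If $k=1$ this reads $c>1$: then $C$ is a strictly increasing (genuine) Beatty sequence, so $D:=C$ works, $C=D\star\mathbb{Z}_0^{0}=D$ has slope $\delta=c=\frac{2-\alpha}{\alpha-1}$, and ``$n$ appears in $C$ iff $n\in D$'' is immediate; this is part (2). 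If $k\ge2$ then $c\in(0,1)$ with $\frac1{(k-1)+1}<c<\frac1{k-1}$, so Lemma~\ref{meslem9} and Corollary~\ref{corsortjoin} produce an increasing Beatty sequence $D$ with $C=D\star\mathbb{Z}_0^{k-1}$, in which every nonnegative integer occurs $k-1$ or $k$ times, occurring $k$ times exactly for the members of $D$; that is precisely the frequency assertion of part (1). Applying Lemma~\ref{mesprop3} to the Beatty sequence $C$ of slope $c$ with its associated integer $k-1$ then exhibits the slope $\delta$ of $D$ as the finite continued fraction $-1+\cfrac{1}{2-\cfrac{1}{2-\cdots}}$ ending in $\cfrac{1}{1-c}$, with $1<\delta<2$.

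The one genuine computation is to rewrite this continued fraction in $\alpha$; it collapses because of the single identity
$$1-c=1-\frac{2-\alpha}{\alpha-1}=\frac{2\alpha-3}{\alpha-1}=2-\frac1{\alpha-1}.$$
Setting $T_0(x)=x$ and $T_{j+1}(x)=2-\tfrac1{T_j(x)}$ (so $T_j$ shows $j$ twos), this identity is $T_0(1-c)=T_1(\alpha-1)$, and hence $T_j(1-c)=T_{j+1}(\alpha-1)$ for all $j$ by a one-line induction. Substituting into the formula from Lemma~\ref{mesprop3} turns the continued fraction in $1-c$ into the one in $\alpha-1$ asserted in part (1), gaining exactly one extra ``$2$'' so that $k-1$ twos occur; and for $k=1$ the same formula degenerates to $\delta=-1+\tfrac1{\alpha-1}=\tfrac{2-\alpha}{\alpha-1}$, matching part (2).

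I expect the principal difficulty to be bookkeeping rather than conceptual: one must verify carefully that the integer the auxiliary results (Lemmas~\ref{meslem9}, \ref{mesprop2}, \ref{mesprop3} and Corollary~\ref{corsortjoin}) attach to $c$ is $k-1$ and not $k$; that exactly $k-1$ applications of the map $s\mapsto-1+\tfrac1{1-s}$ carry $c$ to a non-repeating (slope in $(1,2)$) sequence, so that the expansion has $k-1$ twos; and that $\delta$ really does land in $(1,2)$, so $D$ is a bona fide Beatty sequence. As a check, for $\alpha=\phi$ one gets $k=2$, $c=\phi-1=1/\phi$, $1-c=1/\phi^{2}=2-\phi$, and $\delta=-1+\frac{1}{2-\frac1{\alpha-1}}=-1+\phi^{2}=\phi$, so $D$ is the golden Beatty sequence and $C=D\star\mathbb{Z}_0$, recovering Remark~\ref{remark3}.
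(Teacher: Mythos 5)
Your proposal is correct and follows essentially the same route as the paper: part (3) is quoted from Theorem \ref{slopeofC}, the $k=1$ case is handled by observing $c=\frac{2-\alpha}{\alpha-1}>1$ so that $D=C$, and the $k>1$ case applies Lemma \ref{meslem9}/Lemma \ref{mesprop3} to the slope $c$ and converts the terminal $\frac{1}{1-c}$ into $\frac{1}{2-\frac{1}{\alpha-1}}$, gaining exactly one extra $2$ --- which is precisely the adjustment the paper makes in \eqref{mes18}. The off-by-one bookkeeping you flag as the main risk is genuine (the paper's own count of $2$'s in Lemma \ref{mesprop3} versus Lemma \ref{mesprop8} is strained in the same place), but your golden-ratio sanity check confirms the stated count of $k-1$ twos, matching the paper's conclusion.
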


\begin{remark}
\label{mesrem6}
 Note here that if $\alpha=\phi$, where $\phi$ is the golden ratio, then 
$c=\frac{2-\phi}{\phi-1}=(2-\phi)\phi=2\phi-\phi^2=\phi-1=\frac{1}{\phi}$.  Also, observe here that the number $\delta$ can be written as
$$\delta =\frac{2-\alpha}{k\alpha -2k+1}\text{.}$$
This reduces to
$$\delta =\frac{2-\alpha}{\alpha -1}$$
 when $k=1$.
\end{remark}

\begin{proof}
We already proved in Theorem \ref{slopeofC} that $C$ is generated by $c:=\frac{2-\alpha}{\alpha -1}$. 
Since $1<\alpha <2$, there exists a unique positive integer $k$ such that 

\begin{equation}
 \label{mes17}
\displaystyle \frac{2k-1}{k}< \alpha < \frac{2k+1}{k+1}\text{.}
\end{equation}

Case I: $k>1$. We will apply Theorem \ref{mesprop3} with the $\alpha$ of that Theorem 
equal to  $c=\frac{2-\alpha}{\alpha-1}=-1+\frac{1}{\alpha-1}$ (i.e., the slope of the sequence $C$). 
We first notice that the last portion of the continued fraction of $\beta$ in Theorem \ref{mesprop3} is given by 
\begin{equation}
 \label{mes18}
2-\cfrac{1}{1-c}=2-\cfrac{1}{2-\cfrac{1}{\alpha-1}}\text{.}
\end{equation}
\noindent Thus, we gain an extra $2$ here, compared to Lemma \ref{mesprop3}, and hence, when applying that lemma we  
will need to adjust for that extra $2$. With that in mind, we see that \eqref{mes17} implies 
that $\frac{1}{k}<\frac{2-\alpha}{\alpha -1}< \frac{1}{k-1}$. Since, there are $k-1$ two's in the expansion of $\delta$, it is 
equivalent to having $k-2$ two's in Lemma \ref{mesprop3}, because we need to adjust for the extra $2$, shown in 
\eqref{mes18}.  Thus each positive integer will appear in $C$ $k$ or $k-1$ times. Also, Lemma \ref{mesprop3} tells us that
those numbers that will appear $k$ times are, precisely, the integers belonging to the Beatty sequence $D$ with the slope 
that we here called $\delta$.

Case II: $k=1$, i.e., $\displaystyle 1<\alpha<\frac{3}{2}$. We see that $c=\frac{2-\alpha}{\alpha -1}>1$. In this case,
 each positive integer either will appear once in $C$ or will not appear at all. Thus $D$ and $C$ turn out to be 
the same sequence, and thus $C$ can be described as follows. Each integer $n$ will appear once in $C$ if  $n\in D$, and will not appear 
in $C$ otherwise. 
\end{proof}

If we compare parts (1) and (3) in Lemma \ref{mesprop8} with Corollary \ref{corsortjoin} we get:
\begin{corollary}
To generate a Beatty sequence with the MES algorithm as in Lemma \ref{mesprop8} the skipping sequence is given by the sortjoin of $D$ and $k$ copies of $\mathbb{Z}_0$. Specifically, $C=D\star\mathbb{Z}_0^k$, and the slope of $C$ is given by $\frac{2-\alpha}{\alpha-1}$
\end{corollary}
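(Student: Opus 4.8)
The plan is to derive the decomposition by feeding part (3) of Lemma \ref{mesprop8} into Corollary \ref{corsortjoin}. Part (3) already tells us that the skipping sequence $C$ is itself a Beatty sequence of slope $c=\frac{2-\alpha}{\alpha-1}$, which settles the ``slope of $C$'' clause outright. The only work left is to rewrite this single Beatty sequence in sortjoin form, and Corollary \ref{corsortjoin} does exactly that for any Beatty sequence of slope in $(0,1)$. So the whole argument reduces to locating the slope $c$, applying Corollary \ref{corsortjoin} to $C$, and checking that the defining sequence and the exponent it returns are the ones named in the statement.

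First I would pin down the size of $c$, which is irrational because $\alpha$ is. Since $1<\alpha<2$ we have $c>0$, and $c<1$ precisely when $\alpha>\frac32$. In the range $\frac32<\alpha<2$ I would set $k$ to be the unique integer with $\frac{1}{k+1}<c<\frac1k$ and apply Corollary \ref{corsortjoin} verbatim to $C$, obtaining $C=D\star\mathbb{Z}_0^{k}$. In the complementary range $1<\alpha<\frac32$ one has $c>1$, so $C$ is a Beatty sequence of slope greater than $1$ in which every nonnegative integer occurs at most once; here $C=D$, the degenerate sortjoin $D\star\mathbb{Z}_0^{0}$, i.e.\ $k=0$. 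In both cases the exponent in the decomposition is the integer attached to the slope of $C$ by Corollary \ref{corsortjoin}, with the convention $k=0$ when $c>1$.

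The step that needs care — and the point where this argument departs from a naive reading of Lemma \ref{mesprop8} — is the bookkeeping of that integer, because the parameter appearing in Lemma \ref{mesprop8}, which I will call $m$ (defined there by $\frac{2m-1}{m}<\alpha<\frac{2m+1}{m+1}$), is \emph{not} the sortjoin exponent. For $m\ge2$, substituting the endpoints of that interval into $c=\frac{2-\alpha}{\alpha-1}$ shows $c\in(\frac1m,\frac{1}{m-1})$, so the integer for which $\frac{1}{k+1}<c<\frac1k$ is $k=m-1$; the degenerate case $m=1$ gives $c>1$ and $k=0=m-1$ as well. This is the reconciliation: the frequencies recorded in Lemma \ref{mesprop8}, namely that each element of $D$ occurs $m$ times in $C$ and every other nonnegative integer occurs $m-1$ times, become, in the language of Corollary \ref{corsortjoin} applied to the slope $c$, multiplicities $m=k+1$ and $m-1=k$ respectively, which are precisely the multiplicities produced by $D\star\mathbb{Z}_0^{k}$. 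Thus the exponent is $k$, not $k-1$, once $k$ is taken to be the integer determined by the slope of $C$ itself.

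Finally I would confirm that the defining sequence $D$ returned by Corollary \ref{corsortjoin} is the same $D$ named in Lemma \ref{mesprop8}. Both are characterized as the increasing sequence of those nonnegative integers occurring in $C$ with the higher multiplicity, so they coincide as sets and hence as increasing sequences; by Lemma \ref{mesprop3} this subsequence is the Beatty sequence of slope $\delta$, matching Lemma \ref{mesprop8}. The hard part will therefore not be any single computation but the consistent tracking of the two integer parameters $m$ and $k$ across Lemma \ref{mesprop8} and Corollary \ref{corsortjoin}: once one checks $c=\frac{2-\alpha}{\alpha-1}\in(\frac{1}{k+1},\frac1k)$ with $k=m-1$ and identifies the higher-multiplicity subsequence as $D$, the identity $C=D\star\mathbb{Z}_0^{k}$ follows at once.
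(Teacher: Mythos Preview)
Your approach is exactly the paper's: the corollary is recorded there as the direct consequence of comparing parts (1) and (3) of Lemma \ref{mesprop8} with Corollary \ref{corsortjoin}, with no further argument given. Your reconciliation of the two integer parameters is in fact more careful than the paper's own statement --- as you correctly compute, with the $k$ of Lemma \ref{mesprop8} the sortjoin exponent should be $k-1$, which is the formulation $C=D\star\mathbb{Z}_0^{k-1}$ the paper itself uses in part (4) of Theorem \ref{partial1}.
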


\begin{theorem}
 \label{mesprop11}
The MES algorithm generates a Beatty sequence if and only if the defining sequence is also a Beatty sequence  $D$ given 
by the slope $\delta$ from Lemma \ref{mesprop8}, and the frequency is given by: if a number belongs to $D$, it will appear $k$ times in the skipping sequence, otherwise, it will appear $k-1$ times. \end{theorem}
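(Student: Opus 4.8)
\textbf{Proof plan for Theorem \ref{mesprop11}.}

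The plan is to obtain this as a direct synthesis of the structural facts already assembled, chiefly Lemma \ref{mesprop8} together with Theorem \ref{slopeofC} and Corollary \ref{corsortjoin}. The statement is a biconditional, so I would split it into the two implications. For the ``if'' direction: assume the defining sequence of the MES algorithm is the Beatty sequence $D$ with slope $\delta$ as in Lemma \ref{mesprop8}, with the stated frequency pattern (a number of $D$ contributing $k$ copies, a number not in $D$ contributing $k-1$ copies). By the inductive description in Lemma \ref{mesprop8}, this frequency pattern is exactly the one realized when the MES is run to produce the Beatty sequence with slope $\alpha$, where $\alpha$ is recovered from $\delta$ by inverting the continued-fraction relation of Lemma \ref{mesprop8} (equivalently, $\delta = \frac{2-\alpha}{k\alpha-2k+1}$ from Remark \ref{mesrem6}, solved for $\alpha$). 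Since the skipping sequence $C$ is, by Part (3) of Lemma \ref{mesprop8} and the subsequent corollary, determined as $C = D\star\mathbb{Z}_0^{k}$ and has slope $\frac{2-\alpha}{\alpha-1}$, and since Part (3) of the main theorem (already proved) guarantees that a nonnegative skipping sequence determines the MES output uniquely, the output is precisely the Beatty sequence with slope $\alpha$. Hence the MES generates a Beatty sequence.

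For the ``only if'' direction: suppose the MES algorithm, run on some nonnegative skipping sequence $C$, outputs a Beatty sequence $A$ with slope $\alpha$. Because $A$ is a Beatty sequence with $1<\alpha<2$ (the MES always sets $a_1=1$ and produces a complementary pair, so the output slope lies strictly between $1$ and $2$), there is a unique $k$ with $\frac{2k-1}{k}<\alpha<\frac{2k+1}{k+1}$. By Theorem \ref{slopeofC}, the sequence $C$ of ``inside'' counts attached to this complementary pair has slope $c=\frac{2-\alpha}{\alpha-1}$; and the skipping sequence that reproduces $A$ under the MES is exactly this $C$ (this is the content of Remark \ref{mesprop6b} and Part (3) of Lemma \ref{mesprop8}, identifying the skipping sequence with the ``number of integers between $a_n$ and $b_n$ in $A$''). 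Now apply Corollary \ref{corsortjoin} to $c$: since $\frac1{k+1}<c<\frac1{k}$ (equivalently $\frac{2k-1}{k}<\alpha<\frac{2k+1}{k+1}$, as computed in the proof of Lemma \ref{mesprop8}), the sequence $C$ with slope $c$ is a $k$-fold sortjoin $C = D\star\mathbb{Z}_0^{k}$ of $\mathbb{Z}_0$ with a Beatty sequence $D$, and Lemma \ref{mesprop8} identifies the slope of $D$ as the $\delta$ given there and shows the frequency is exactly: $k$ occurrences if $n\in D$, $k-1$ occurrences otherwise. Thus the defining sequence is the Beatty sequence $D$ with slope $\delta$ and the claimed frequency, which is what we wanted.

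The only genuine subtlety — and the step I expect to require the most care — is the bookkeeping that matches the ``extra $2$'' in the continued fraction. Lemma \ref{mesprop8} already addresses this (the parenthetical about $k-2$ versus $k-1$ two's, and the adjustment explained around equation \eqref{mes18}), so here it is mostly a matter of quoting that lemma correctly and making sure the count $k$ used in the frequency statement of Theorem \ref{mesprop11} is the same $k$ as the one defined by the inequality $\frac{2k-1}{k}<\alpha<\frac{2k+1}{k+1}$, rather than the $k$ appearing in Corollary \ref{corsortjoin}'s inequality $\frac1{k+1}<c<\frac1{k}$. A short computation shows these two conditions on $\alpha$ and on $c=\frac{2-\alpha}{\alpha-1}$ coincide, so no genuine obstacle remains; the proof is essentially an assembly of Lemma \ref{mesprop8}, Theorem \ref{slopeofC}, Corollary \ref{corsortjoin}, and Part (3) of Theorem \ref{partial1}.
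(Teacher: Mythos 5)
Your proposal is correct and follows essentially the same route as the paper's proof: both directions rest on Lemma \ref{mesprop8} together with the invertible relation $\delta=\frac{2-\alpha}{k\alpha-2k+1}$, i.e.\ $\alpha=\frac{2-\delta+2k\delta}{1+k\delta}$. Your write-up is somewhat more explicit than the paper's (which leaves the uniqueness of the MES output and the appeal to Theorem \ref{slopeofC} and Corollary \ref{corsortjoin} implicit), but the underlying argument is the same.
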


\begin{proof} 
The number $\delta$ given in Lemma \ref{mesprop8} can be also written as 
\begin{equation}
\label{mes21}
\delta =\frac{2-\alpha}{k\alpha -2k+1}\text{,}
\end{equation}
or equivalently
\begin{equation}
\label{mes21a}
\alpha = \frac{2-\delta + 2 k \delta}{1+k\delta}\text{.}
\end{equation}
Equation \eqref{mes21} tell us that to produce the Beatty sequence with slope $\alpha$, we just need to start 
with the skipping sequence generated by the number $\delta$ given in the right hand side of the equality, with frequency $k$. 
We know that if $A$ is a Beatty sequence, so is $B$. So we may assume both $A$ and $B$ are Beatty sequences. In this case, 
Lemma \ref{mesprop8} tells us that the defining sequence $D$ is also a Beatty sequence given by \eqref{mes21}. 
For the converse, \eqref{mes21a} tells us that given a defining sequence with slope $\delta$, we will generate a different 
Beatty sequence with slope $\alpha$ for each positive integer $k$ we take, as given in \eqref{mes21a}.
\end{proof}

We now state and prove the following theorem, which is part (4), of the Partial Decomposition Theorem.
\begin{corollary}
\label{corofinal}
The MES generalizes the MEX algorithm as follows: Suppose there is an increasing sequence of positive integers $D$ such that $C=D\star \mathbb{Z}_0^{k-1}$ for some $k\in \mathbb{N}$. Then we have the following
\begin{enumerate}
\item If  $k=2$ and $D$ is the Beatty sequence defined by the golden ratio, then the MES and the MEX algorithm coincide, i.e., they both produce $A$. In other words, the MES produces the sequence $A$ if and only if the skipping sequence $C$ can be defined as follows: any nonnegative integer $c$ appears  once or twice in $C$, and $c$ appears twice in $C$ if and only if $c\in A$.
\item  Let $D$ be the sequence given by
$$\alpha=\frac{k-1+\sqrt{k^2+1}}{k}\text{.}$$
Then the MES algorithm produces the Beatty sequence $D$ and its complement. In other words, the MES produces the sequence $D$ if and only if the skipping sequence $C$ can be defined as follows: any nonnegative integer $c$ appears  $k$ or $k-1$ times in $C$, and $c$ appears $k$ times in $C$ if and only if $c\in D$. This coincides with the MEX and the golden ratio when $k=2$.
\end{enumerate}
\end{corollary}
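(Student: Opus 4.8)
The plan is to derive Corollary \ref{corofinal} as a consequence of the machinery already assembled, principally Lemma \ref{mesprop8}, Theorem \ref{mesprop11}, and the characterization of the skipping sequence in terms of sortjoins (Corollary \ref{corsortjoin}). The two parts are not independent: part (1) is the special case $k=2$ of part (2) together with the observation that for $\alpha=\phi$ the MES coincides with the MEX, so I would prove (2) first and then specialize.

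First I would address part (2). Given $k\in\mathbb N$, set $D$ to be the Beatty sequence with slope $\alpha=(k-1+\sqrt{k^2+1})/k$ and let $\beta$ be its complement (which exists and is a Beatty sequence, since $\alpha$ is irrational and $>1$). The key algebraic step is to verify that with this $\alpha$, the slope $\delta$ appearing in Lemma \ref{mesprop8} (equivalently, formula \eqref{mes21}) equals $\beta$ — or, put the other way, that feeding the defining sequence $D$ of slope $\beta$ into the MES with frequency $k$ returns exactly this $\alpha$ via \eqref{mes21a}. Concretely: $\alpha$ is chosen so that $\alpha$ is a fixed point of the map implicit in the iteration, i.e. $\alpha$ satisfies $k\alpha^2 - 2(k-1)\alpha - \dots = 0$; one checks $\alpha = (k-1+\sqrt{k^2+1})/k$ is the root that makes $C = D \star \mathbb Z_0^{k-1}$ consistent, meaning the Beatty sequence produced by the MES from $C$ is $D$ itself. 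I would run the arithmetic: from $\alpha = (k-1+\sqrt{k^2+1})/k$ one gets $k\alpha - (k-1) = \sqrt{k^2+1}$, hence $(k\alpha-k+1)^2 = k^2+1$, which expands to a clean quadratic relating $\alpha$ to $1/\alpha$ (or to its complement $\beta$ with $1/\alpha+1/\beta=1$). Plugging this into \eqref{mes21} should collapse the continued-fraction expression for $\delta$ — or more simply, since $1<\alpha<2$ here one computes the integer $k$ with $\frac{2k-1}{k}<\alpha<\frac{2k+1}{k+1}$ is precisely the given $k$ (this interval condition should follow from the explicit formula by a short estimate on $\sqrt{k^2+1}$), and then Lemma \ref{mesprop8}(1),(2) applies verbatim: the defining sequence $D$ has slope $\delta$, each integer appears $k$ or $k-1$ times in $C$, and an integer appears $k$ times iff it lies in $D$. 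The ``if and only if'' in the statement is then exactly the frequency description from Lemma \ref{mesprop8} combined with Theorem \ref{mesprop11}, which already says the MES produces a Beatty sequence iff the defining sequence is Beatty with the slope $\delta$ and the stated frequency pattern.

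Then part (1) follows by setting $k=2$: one checks $(k-1+\sqrt{k^2+1})/k = (1+\sqrt5)/2 = \phi$, so $D$ is the golden-ratio Beatty sequence. For this $\alpha=\phi$ we have $k=2$, so each integer appears once or twice in $C$, twice iff it is in $D=A$; and the frequency rule ``$c$ appears twice iff $c\in A$'' is precisely the inductive description of $C$ recorded in Remark \ref{remark3}. It remains to see that with this $C$ the MES reduces to the MEX, i.e. that STEP 3 of the MES ($b_n = \mex_{c_n}(A_n\cup B_{n-1})$) produces the same $b_n$ as STEP 3 of the MEX ($b_n = a_n + n$). This I would argue by induction on $n$, using the golden-ratio identity \eqref{wythoff}, $b_n - a_n = n$: the number of excluded integers in $(a_n, b_n)$ relative to $A_n\cup B_{n-1}$ is, by part (1) of the main theorem, $c_n$ (the count of elements of $A$ strictly between $a_n$ and $b_n$), so skipping exactly $c_n$ excluded integers past $a_n$ lands on $b_n$; conversely $c_n$ is what the inductive rule of Remark \ref{remark3} assigns.

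The main obstacle I expect is the algebraic verification in part (2) that the slope $\delta$ of Lemma \ref{mesprop8} — presented there as a nested continued fraction with $k-1$ twos — actually equals the complement $\beta$ of $\alpha=(k-1+\sqrt{k^2+1})/k$, and that the interval condition $\frac{2k-1}{k}<\alpha<\frac{2k+1}{k+1}$ holds for this explicit $\alpha$. The continued fraction telescopes because $2 - 1/\beta = $ (something), but keeping track of the ``extra $2$'' adjustment noted in the proof of Lemma \ref{mesprop8} is where sign/indexing errors are most likely; I would handle it by working instead with the closed form $\delta = \frac{2-\alpha}{k\alpha-2k+1}$ from Remark \ref{mesrem6} and Theorem \ref{mesprop11}, substituting the quadratic relation for $\alpha$ and simplifying, which avoids the continued fraction entirely. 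Everything else is bookkeeping against results already proved.
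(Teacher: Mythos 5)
Your proposal is correct and follows essentially the same route as the paper, whose entire proof is the fixed-point substitution $\alpha=\delta$ in \eqref{mes21a} (equivalently in $\delta=\frac{2-\alpha}{k\alpha-2k+1}$), yielding the quadratic $k\delta^2+(2-2k)\delta-2=0$ with positive root $\frac{k-1+\sqrt{k^2+1}}{k}$; your closing plan to work with the closed form from Remark \ref{mesrem6} rather than the continued fraction is exactly this computation. One correction and one remark: early on you say the $\delta$ of Lemma \ref{mesprop8} should equal $\beta$, but for the MES output to be $D$ itself the condition is $\delta=\alpha$ (which is what your actual arithmetic uses), and your additional verifications (the interval $\frac{2k-1}{k}<\alpha<\frac{2k+1}{k+1}$ needed to invoke Lemma \ref{mesprop8} with the given $k$, and the $k=2$ reduction of MES to MEX) are details that the paper's one-sentence proof leaves implicit and that genuinely belong in a complete argument.
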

\begin{proof}
    Substitute $\alpha=\delta$ in Equation \ref{mes21a}, and solve for $\delta$ in the resulting rational expression to obtain a quadratic irrational with parameter $k$. The case $k=2$ gives the first part of the theorem.  
\end{proof}
\begin{remark}
As seen in \eqref{mes21} and \eqref{mes21a}, a given $\delta$ can generate infinitely many $\alpha$'s. However each 
given $\alpha$ is generated by a unique choice of $\delta$ and $k$.\\
\end{remark}
\noindent \textbf{Example.} As an illustration of Theorem \ref{mesprop11} consider the number $\displaystyle \alpha=\frac{187+2\sqrt{13}}{113}$. The Beatty
sequence generated by $\alpha$ is $\{1, 3, 5, 6, 8, 10, 12, 13, 15, 17, 18, 20, 22, 24, 25,...\}$. If we run the MES algorithm
with frequency $k=3$ and defining sequence given by $\delta=\sqrt{13}/2$, we obtain Table \ref{tab:t5}.
\begin{center}
 \begin{minipage}{1\linewidth}
 \centering
  \begin{tabular}{l l l l l l}
$n^{th}$  &   A   &   B   &   C   &   R  & D\\
 1)  &   1   &   2   &   0    &   0   & 1    \\
 2)  &   3   &   4   &   0    &   0   & 3    \\
 3)  &   5   &   7   &   1    &   0   & 5     \\
 4)  &   6   &   9  &    1   &   1    & 7      \\
 5)  &   8   &   11  &   1     &   1    & 9  \\
 6)  &   10   &  14  &   2    &   1    &10     \\
 7)  &   12  &   16  &   2    &   1   &12    \\
 8)  &   13  &   19  &   3    &   2   &14     \\
 9)  &   15  &   21  &   3    &   2   &16   \\
 10)  &   17  &  23  &   3    &   2   &18 \\
 11)  &   18  &  26  &   4    &   3    & 19    \\
 12)  &   20  &  28  &   4    &   3   &21   
\end{tabular}\\
\vspace{.3cm}

 \label{tab:t5}
This Table shows the MES algorithm when the frequence $k$ is given by $k=3$ and the defining sequence $D$ is given by $\delta=\sqrt{13}/2$. Note that the output
sequences $A$ is given by $\displaystyle \alpha=\frac{2-\delta + 2 l \delta}{1+k\delta}=\frac{2-\delta + 6 \delta}{1+6\delta}=\frac{187+2\sqrt{13}}{113}$
\end{minipage}\hfil
\vspace{.3cm}
\end{center}

\begin{example}
Example \ref{goldenMESexample} can be rephrased recursively as follows:
 \textit{The skipping sequence $C$ of the MES algorithm that generates the sequence $A$ in Example \ref{goldenMESexample} is
 given by the condition ``if $n \in A$, then it will repeat twice in $C$. Otherwise, $n$ will appears only once in $C$''}. 
Note that if $\delta=\frac{1+\sqrt5}{2}$, and $k=2$ then we get sequence in Example \ref{goldenMESexample}, i.e., we get the sequence with the golden ratio, and this slope coincides with the defining sequence. 
\end{example}

\section{Concluding Remarks}

What does the MES algorithm offer that the MEX does not? Notice that both algorithms allow us to generate any pair of Beatty (and indeed any pair of complementary) sequences. The MEX uses function $b_n-a_n=h_n$ to generate these sequences, and thus any interesting application requires that we are able to easily describe the function $h_n$ independent of $a_n$ and $b_n$. To obtain the Beatty sequences generated $\alpha$ and $\beta$, a few cases are easy to describe: for instance \cite{wythoff1907modification} shows that $h_n=n$ gives the case $\alpha=\frac{1+\sqrt{5}}{2}$, and more generally, \cite{fraenkel_1982} and \cite{holladay1968some} show that $h_n=tn$ gives the family $\frac{2-t+\sqrt{t^2+1}}{2}$. The MES is more advantageous because it gives the general formula $h_n=c_n+r_n+1$ with explicit formulas for $c_n$ and $r_n$ given in the main theorem. It also gives the added benefit of the combinatorial interpretation for $c_n$ and $r_n$ linking it back to the complementary sequences $A$ and $B$. We conclude with an open-ended question: Given the many applications of the mex function or the MEX algorithm as highlighted in section 2, Does the MES algorithm reveal interesting connections that illuminate some aspects of these applications?\\

\noindent {\bf Acknowledgements.}
This research has been partially supported by the Ministerio de Educación Superior, Ciencia y Tecnología (MESCyT) of the Dominican Republic, grant No. 2018-2019-1D2-261

\bibliographystyle{model1a-num-names}

\end{document}